\newcommand{\df}[1]{{\bf{#1}}{\index{#1}}}
 \numberwithin{equation}{section}
      \newtheorem{theorem}{Theorem}[section]
      \newtheorem{remark}[theorem]{Remark}
       \newtheorem{corollary}[theorem]{Corollary}
      \newtheorem{lemma}[theorem]{Lemma}
      \newtheorem{proposition}[theorem]{Proposition}
      \def\C{{\mathbb R}}
      \def\N{{\mathbb N}}
      \def\C{{\mathbb C}}
      \def\cU{\mathcal U}
      \def\cM{\mathcal M}
      \def\cW{\mathcal W}
      \def\cD{\mathcal D}
\newcommand{\rR}{{\tt{r}}}
\newcommand{\tT}{{\tt{t}}}
\newcommand{\mfZ}{\mathfrak{Z}}
\newcommand{\jj}{{\tt{a}}}
\newcommand{\kk}{{\tt{b}}}
\newcommand{\RR}{\mathbb{R}}
\newcommand{\CC}{\mathbb{C}}
\newcommand{\bS}{\mathbb{S}}
\newcommand{\cZ}{\mathcal{Z}}
\newcommand{\cL}{\mathcal{L}}
\newcommand{\vg}{{\tt{g}}}
\newcommand{\tg}{{\tt{h}}}
\newcommand{\vmu}{\mu}
\newcommand{\vnu}{\mu}
\newcommand{\ka}{\kappa}
\newcommand{\SSnmu}{\mathbb S_n(\C^\vmu)}
\newcommand{\SSn}{\mathbb{S}_n(\C)}
\newcommand{\SSnvg}{\mathbb{S}_n(\C^\vg)}
\newcommand{\SSnk}{\mathbb{S}_n(\C^k)}
\newcommand{\SSNmu}{\mathbb{S}_N(\C^\mu)}
\newcommand{\wtc}{\widetilde{c}}
\newcommand{\spann}{\operatorname{span}}
\newcommand{\AB}{C}
\newcommand{\vv}{\gamma}
\title[BMIs and free polynomials]{Bilinear matrix inequalities and polynomials in several freely noncommuting variables}
\author[S. Balasubramanian]{Sriram Balasubramanian${}^*$}
\address{Department of Mathematics\\
IIT Madras, Chennai - 600036, India.}
\email{bsriram@iitm.ac.in, bsriram80@yahoo.co.in}
\thanks{${}^*$ Supported by the grant MTR/2018/000113 from the Department of Science and Technology (DST),
 Govt. of India.}
\author[N. Hotwani]{Neha Hotwani${}^1$}
\address{Department of Mathematics\\
IIT Madras, Chennai - 600036, India.}
\email{ma18d016@smail.iitm.ac.in}
\thanks{${}^1$ Supported by the fellowship 0203/16(8)/2018-R\&D-II from the National Board for Higher Mathematics (NBHM),
 Govt. of India.}
\author[S. McCullough]{Scott McCullough}
\address{Scott McCullough, Department of Mathematics\\
 University of Florida\\ Gainesville 
  }
  \email{sam@math.ufl.edu}
\subjclass[2010]{46N10, 26B25 (Primary); 47A63, 52A41, 90C25 (Secondary)}
\keywords{partial convexity, biconvexity, bilinear matrix inequality (BMI), 
noncommutative polynomial}
\begin{document}

\begin{abstract}
 Matrix-valued polynomials in any finite number of
 freely noncommuting variables that
 enjoy certain canonical partial convexity properties are characterized,
 via an algebraic certificate, 
 in terms of Linear Matrix Inequalities and Bilinear Matrix 
 Inequalities.   
\end{abstract}

\maketitle

\section{introduction}
 The main results of this article extend  principal results of
 \cite{HHLM08} on convex
  polynomials in freely noncommuting variables to the matrix-valued case 
  and of \cite{JKMMP21} on $xy$-convex polynomials  
 to the  matrix-valued setting  in any finite number of freely
 noncommuting variables.

 Fix a positive integer $\vg.$ Given a positive integer $d$ 
 and 
 $d\times d$ matrices, $A_0,A_1,\dots,A_\vg,$ the expression 
\[
 L_A(x) = A_0 - \sum_{j=1}^{\vg} A_j x_j 
\]
 is a \df{linear pencil}, where $A = (A_0,A_1,\dots,A_g).$ \index{$L_A$}
 In the case the $A_j$ are hermitian the pencil is \df{hermitian}
 and,  in this case, it is typically assumed that $A_0$
 is positive definite. 
 When $L_A$ is hermitian and  $x\in \RR^g,$ the matrix $L_A(x)$ is hermitian
 and
\[
 L_A(x)\succeq 0
\]
 is a \df{linear matrix inequality (LMI)}. Here  $T\succeq 0$
 indicates that the hermitian matrix $T$ is positive semidefinite.
   The (scalar) solution, or \df{feasible}, set of a 
 hermitian pencil $L_A,$ 
\[
 \cD_A[1] =\{x\in \RR^{\vg}: L_A(x) \succeq 0\},
\]
 is a \df{spectrahedron}. 
 Because $L_A$ is affine linear, it is evident that \df{$\cD_A[1]$} is convex.
 Spectrahedra figure prominently in numerous engineering
 applications. They are fundamental objects in semidefinite programming
  in convex optimization and in 
 real algebraic geometry. 

 Given $d\times d$ hermitian matrices 
 $A_0,A_1,\dots,A_{\vg},B_1,  \dots, B_{\tg}, C_{pq}, 1 \le p \le \vg, 1 \le q \le \tg,$
 the expression 
\begin{equation*}
L(x,y) = A_0 - \sum_{j=1}^{\vg} A_j x_j - \sum_{k=1}^{\tg} B_k y_k 
 -  \sum_{p,q=1}^{\vg,\tg} C_{pq} x_p y_q,
\end{equation*}
 is an \df{$xy$-pencil}. 
 When 
 all the coefficient matrices are hermitian, $L$
 is a \df{hermitian $xy$-pencil}. For a hermitian 
 $xy$-pencil, the inequality 
 $L(x,y)\succeq 0$ is a \df{Bilinear Matrix Inequality (BMI)}. 
 Bilinear matrix inequalities appear in robust control.
 See for instance \cite{KSVdS,SGL,vAB} and the references
 therein and the MATLAB toolbox, \\
{\url{https://set.kuleuven.be/optec/
 Software/bmisolver-a-matlab-package-for-solving-optimization-problems
 -with-bmi-constraints.}}

 It is natural from multiple perspectives to consider 
 the fully matricial analogs of LMIs and BMIs. 
 For positive integers $n,$ let \df{$\SSn$} denote 
 the set of $n\times n$ hermitian matrices and let
 \df{$\SSnvg$} denote the set of $\vg$-tuples from $\SSn.$
 Given $X=(X_1,\dots,X_\vg)\in \SSnvg,$ let
\[
  L_A(X) =A_0\otimes I_n -\sum A_j\otimes X_j
\] 
 and let 
\[
 \cD_A[n]=\{X\in \SSnvg: L_A(X)\succeq 0\}.
\]
 The sequence $\cD_A=(\cD_A[n])_n$ is known as a
  \index{$\cD_A$}  \df{free spectrahedron}
 or \df{LMI domain}.  While $\cD_A[1]$ does not determine $A,$
 up to unitary equivalence, the free spectrahedra $\cD_A$ does.

 Free spectrahedra are \df{matrix convex}, meaning 
\begin{enumerate}[(i)]
 \item $\cD_A$ is closed with respect to isometric compressions:
 if $X\in \cD_A[n]$
 and $V:\CC^m\to \CC^n$ is an isometry, then $V^*XV\in \cD_A[m],$
 where 
\[
 V^*XV = V^*(X_1,\dots,X_\vg)V = (V^*X_1V, \dots, V^*X_\vg V);
\]
 and 
\item $\cD_A$ is closed under direct sums:
 if  $X\in \cD_A[n]$ and $Y\in \cD_A[m],$ then $X\oplus Y\in \cD_A[n+m],$
 where
\[
 (X\oplus Y)_j =\begin{pmatrix} X_j&0\\0& Y_j \end{pmatrix}.
\]
\end{enumerate}
 In particular each $\cD_A[n]$ is convex in the ordinary sense. 

 Free spectrahedra appear in the theories of
 completely positive maps and operator systems and spaces 
 \cite{paulsen,pisier}. They appear in systems engineering problems
 governed by a signal flow diagram as explained in 
 \cite{convert-to-matin,IMA-survey,CHSY}.
 They also  produce tractable natural relaxations for optimizing 
 over  spectrahedra; e.g., 
 the matrix cube problem  \cite{BtN,DDSS,matcube},  which can be NP hard, but 
 whose canonical free spectrahedral relaxation is a semidefinite program (SDP).

 The fully matricial analog of BMIs is described below,
 after the introduction of polynomials in freely noncommuting
 variables. 
 
\subsection{Free polynomials}
 The two types of partial convexity considered in this article are described
 in terms of free polynomials.  Fix freely noncommuting variables
 $\chi_1,\dots,\chi_k.$ Given a word 
\begin{equation}
 \label{d:w}
  w=\chi_{i_1} \cdots \chi_{i_\ell}
\end{equation}
 in these variables and $T\in \SSnk,$ let
\[
 w(T)=T^w = T_{i_1}\cdots T_{i_\ell}.
\]
 Let $\cW$ denote the collection of words in the variables $\chi.$
 A  \df{$d \times d$ matrix-valued free polynomial} is an expression of the form 
\[
 p(\chi)= \sum_{w\in \cW} p_w w,
\]
 where the sum is finite and the $p_w \in M_d(\C)$. 
 The free polynomial $p$ is naturally \df{evaluated} at $T\in \SSnk$ as
\[
 p(T) =\sum p_w \otimes T^w.
\]
 There is a natural \df{involution ${}^*$} on free polynomials that 
 reverses the order of products in words so that, for $w$ 
 in equation~\eqref{d:w}, 
\[
 w^* = \chi_{i_\ell}\cdots \chi_{i_1}; 
\]
 and such that
\[
 p^* =\sum p_w^* w^*.
\]
 This involution is compatible with the adjoint operation on matrices,
\[
 p(T)^* =p^*(T).
\]
 A free polynomial $p$ is {\bf hermitian}
 \index{hermitian polynomial}
  if $p^*=p$; equivalently, if $p(T)^*=p(T)$ 
 for all $n$ and $T\in \SSnk.$

 From here on we often omit the adjectives matrix and  free and simply
 refer to matrix-valued free polynomials as polynomials, particularly when
 there is no possibility of confusion.

 Since the involution fixes the variables, $\chi_j^*=\chi_j,$
 we refer to $\chi_1,\dots,\chi_k$ as \df{hermitian variables}.
 In Subsection~\ref{s:BVMM}, non-hermitian variables 
 naturally appear.

\subsection{Partial convexity}
 Both types of partial convexity considered in this article involve 
 partitioning  freely noncommuting variables 
 into two classes  $x_1,\dots,x_{\vmu}$ and $y_1,\dots,y_{\vmu}.\footnote{ 
 For the results here, there is no loss in
 generality in assuming the number of $x$ and $y$ variables
 is the same.}$

\subsubsection{$xy$-convexity}
Since matrix multiplication does not commute,
 we now update the definition of an $xy$-pencil as follows. (See \cite{JKMMP21a}.)
 A matrix-valued free polynomial of the form
\begin{equation*}
 L(x,y) = A_0  - \sum_{j=1}^{\vmu} A_j x_j - \sum_{k=1}^{\vnu} B_k y_k 
 -  \sum_{p,q=1}^{\vmu} C_{pq} x_p y_q -   
   \sum_{p,q=1}^{\vmu} D_{qp} y_q x_p,
\end{equation*}
 where $A_j,B_k,C_{pq},D_{qp}$ are all matrices of the same size,
 is an \df{$xy$-pencil}.  The pencil $L$
  is naturally evaluated at a tuple
  $(X,Y)\in \SSnmu \times \SSnmu$ as
\[
 L(X,Y) = A_0 \otimes I_n - \sum_{j=1}^{\vmu} A_j \otimes X_j - \sum_{k=1}^{\vnu} B_k \otimes Y_k 
 -  \sum_{p,q=1}^{\vmu} C_{pq} \otimes X_p Y_q -    \sum_{p,q=1}^{\vmu} D_{qp} \otimes Y_q X_p.
\]
  When the $A_j$ and $B_k$ are hermitian and $D_{qp}=C_{pq}^*,$ the pencil
 $L$ is a \df{hermtian $xy$-pencil} and $L(X,Y)\succeq 0$ is the matricial analog of a BMI.
 Assuming, as we usually do, $A_0$ is positive definite, 
 writing $\Sigma=(A_j,B_j, C_{ij})$ and 
  $L_\Sigma=L,$ let
\[
 \cD_\Sigma[n] =\{(X,Y)\in \bS_n^{\vmu}\times \bS_m^{\vnu}: L_\Sigma(X,Y)\succeq 0\}
\]
 and let \df{$\cD_\Sigma$}
  denote the sequence $(\cD_\Sigma[n])_n.$ The set
 $\cD_\Sigma$ is \df{$xy$-convex}, meaning $\cD_\Sigma$ is
\begin{enumerate}[(i)]
 \item 
   closed  under direct sums; and
 \item 
    if $(X,Y)\in \cD[n]$ and
 $V:\CC^m\to \CC^n$ is an isometry such that $V^*(X_iY_j)V=V^*X_iVV^*Y_jV,$ 
 for all $i,j,$ 
 then $V^*(X,Y)V\in \cD_\Sigma[m].$ 
\end{enumerate}

 A tuple $((X,Y),V)$ where $(X,Y)\in \SSnmu\times \SSnmu$ and
 $V:\CC^m\to \CC^n$ is an isometry 
 such that $V^*(X_iY_j)V=V^*X_iVV^*Y_jV,$ 
 for all $i,j,$  is an \df{$xy$-pair}.   A hermitian matrix-valued
 free polynomial $p(x,y)$ is \df{$xy$-convex} if  
\[
p(V^*(X,Y)V)\preceq (I_d \otimes V)^*p(X,Y) (I_d \otimes V)
\]
 for all  xy-pairs $((X,Y),V).$ It is nearly immediate that,
 if $p$ is $xy$-convex, then the positivity set of $-p,$
\[
 \cD_{-p}=\{(X,Y): p(X,Y)\preceq 0\},
\]
 is also $xy$-convex. \index{$\cD_{-p}$} 
  Theorem ~\ref{t:introxyconvexp} below provides
 an algebraic certificate 
 characterizing $xy$-convex polynomials. 
 When $d=\vmu=1,$ it reduces to \cite[Theorem~1.4]{JKMMP21}.

\begin{theorem}
\label{t:introxyconvexp}
Suppose $p(x,y)$ is a hermitian $d \times d$ matrix-valued  polynomial. If $p$ is  $xy$-convex,
 then there exist a hermitian $d \times d$ matrix-valued $xy$-pencil $\lambda,$
 a positive integer $N$  and an  $N \times d$ matrix-valued $xy$-pencil 
 $\Lambda$ such that 
\begin{equation}
\label{eq:xy-conv}
 p(x,y) = \lambda(x,y) +\Lambda(x,y)^* \Lambda(x,y).
\end{equation}
 In particular, $-p$ is the Schur complement 
 of a Hermitian $xy$-pencil and 
 $\cD_{-p}$  is the feasible  set of  the BMI,
\[
  \begin{pmatrix} I & \Lambda(x,y)\\\Lambda(x,y)^* 
    &  -\lambda(x,y) \end{pmatrix} \succeq 0.
\]

The converse is easily seen to be true.
\end{theorem}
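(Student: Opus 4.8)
The plan is to follow the strategy of \cite{HHLM08,JKMMP21}: convert $xy$-convexity into the positivity of noncommutative ``Hessian''-type polynomials and of the associated defect, then use the structure theory of noncommutative polynomials that are positive under all matrix substitutions (and homogeneous of degree two in a set of direction variables) to produce a Gram, or ``middle matrix'', representation whose middle matrix is a \emph{constant} positive semidefinite matrix, run a leading-term argument to bound the degree of $p$, and finally integrate back to $p=\lambda+\Lambda^*\Lambda$. The first step extracts the Hessian inequalities. Evaluating the $xy$-convexity inequality on $2\times2$-block tuples $X_i=\left(\begin{smallmatrix}\bar X_i & H^x_i\\ (H^x_i)^* & \bar X_i'\end{smallmatrix}\right)$, $Y_j=\left(\begin{smallmatrix}\bar Y_j & H^y_j\\ (H^y_j)^* & \bar Y_j'\end{smallmatrix}\right)$, together with the isometry $V$ onto the first coordinate, one sees that the $xy$-pair condition $V^*X_iY_jV=V^*X_iV\,V^*Y_jV$ is exactly $H^x_i(H^y_j)^*=0$ for all $i,j$. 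Taking $H^y=0$ (so the condition is automatic and the $y$-tuple is a direct sum), choosing $\bar X_i'=\bar X_i$ and $\bar Y_j'=\bar Y_j$, and diagonalizing the $x$-block by $\tfrac1{\sqrt2}\left(\begin{smallmatrix}I&I\\I&-I\end{smallmatrix}\right)$ (which fixes the block-diagonal $y$-tuple), the inequality collapses to midpoint convexity of $x\mapsto p(x,\bar Y)$ at every matrix level and for every fixed hermitian tuple $\bar Y$; symmetrically in $y$. Hence the $x$-Hessian $\cH_{xx}p(x,y)[h]$ and the $y$-Hessian $\cH_{yy}p(x,y)[k]$ are positive semidefinite under all substitutions. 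Keeping both off-diagonal perturbations nonzero but with ranges in complementary subspaces preserves the $xy$-pair condition, so the full directional Hessian $\cH p(x,y)[h,k]$ and, more generally, the defect $(I_d\otimes V)^*p(X,Y)(I_d\otimes V)-p(V^*(X,Y)V)$ are positive semidefinite on the locus $\{h_ik_j=0=k_jh_i:\text{ all }i,j\}$; this is the exact sense in which $xy$-convexity sits below joint convexity.

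The second step is the degree bound and the structure of $p$. The matrix-valued form of the structure theorem underlying \cite{HHLM08} — a noncommutative polynomial that is positive under all substitutions and homogeneous of degree two in $h$ equals $\Lambda(x,y)[h]^*\Lambda(x,y)[h]$ with $\Lambda$ of degree one in $h$ — applied to the highest $x$-degree part of $\cH_{xx}p$, yields a contradiction via a leading-term count (turning the first two $x$'s of a word of $p$ of maximal $x$-degree into $h$'s produces a Hessian monomial whose unique leftmost $h$ is preceded only by $y$'s, which no hermitian square of the required homogeneity can reproduce) unless the $x$-degree of $p$ is at most $2$; symmetrically its $y$-degree is at most $2$, so $\deg p\le 4$. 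A further leading-term analysis of the defect, using the positivity on the locus $\{h_ik_j=0=k_jh_i\}$, eliminates the remaining monomials that appear neither in an $xy$-pencil nor in $\Lambda^*\Lambda$ for an $xy$-pencil $\Lambda$ (concretely $x_ix_jy_ky_\ell$, $y_ky_\ell x_ix_j$, and the pure cubes $x_ix_jx_k$, $y_iy_jy_k$). Writing $\lambda$ for the bi-affine part of $p$ — the sub-polynomial that is itself an $xy$-pencil — and $q:=p-\lambda$, one is left with a hermitian $q$ every monomial of which occurs in $\Lambda^*\Lambda$ for some $N\times d$ $xy$-pencil $\Lambda$. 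Writing $q=W(x,y)^*\cM\,W(x,y)$ with $W$ the column obtained by stacking $d$ copies of the monomial vector $(I,x_j,y_k,x_py_q,y_qx_p)$ and $\cM$ hermitian, the positivity already established forces $\cM$ to be a genuine constant positive semidefinite matrix; factoring $\cM=\Lambda_0^*\Lambda_0$ and putting $\Lambda:=\Lambda_0W$ gives $q=\Lambda^*\Lambda$, hence $p=\lambda+\Lambda^*\Lambda$ with $\lambda=\lambda^*$ since $p$ and $\Lambda^*\Lambda$ are hermitian. The Schur complement and BMI reformulations are then immediate, and the converse is the computation noted after the statement: an $xy$-pencil evaluated at $V^*(X,Y)V$ equals the compression of its value on an $xy$-pair, while $I_N\otimes VV^*\preceq I$.

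The main obstacle is the passage, in the second step, from the already-available fact that the relevant Hessians and the defect are sums of squares (equivalently that $\cM(x,y)\succeq0$ under all substitutions) to the conclusion that $\cM$ is a \emph{constant} positive semidefinite matrix. Unlike the single-variable, scalar case of \cite{JKMMP21}, here the $x$'s and the $y$'s do not commute, so one must pin down exactly which products $u\cdot v$ of $xy$-pencil monomials collide, track the induced linear relations among the entries of $\cM$, and then argue — by evaluation at generic large matrices, or by a leading-term analysis of $\cM W$ — that $\cM$ cannot hide any $x$- or $y$-dependence. The accompanying subtlety is ensuring that the ``complementary-directions'' substitutions above really supply enough mixed second-derivative information to control the genuinely bilinear monomials $x_py_qx_{p'}y_{q'}$ and their relatives, rather than only the separated Hessians $\cH_{xx}p$ and $\cH_{yy}p$.
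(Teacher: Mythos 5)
Your first step (reading off separate convexity in $x$ and in $y$ from $xy$-pairs with block perturbations whose off-diagonal ranges are complementary) and your converse direction both match the paper, and the degree bounds and the exclusion of $x_ix_jy_ky_\ell$-type words are obtained there too (via scalar compressions of the coefficients and the $a^2$-convexity theorem, rather than your leading-term count, but that is a cosmetic difference). The genuine gap is in your final step: you fix $\lambda$ to be the bi-affine part of $p$, set $q=p-\lambda$, write $q=\cW^*\cM\,\cW$, and assert that ``the positivity already established forces $\cM$ to be a genuine constant positive semidefinite matrix.'' That assertion is false in general, and the decomposition you are aiming for ($q$ itself a hermitian square) is too rigid. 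What separate convexity gives you is positivity of the two \emph{overlapping} principal blocks of $\cM$ corresponding to the $x$-Hessian and the $y$-Hessian, namely
\[
\begin{pmatrix} F_0^*F_0 & F_0^*F_1\\ F_1^*F_0 & P\end{pmatrix}\succeq 0,
\qquad
\begin{pmatrix} P & G_1^*G_0\\ G_0^*G_1 & G_0^*G_0\end{pmatrix}\succeq 0,
\]
while the $(x,y)$-corner of $\cM$ is forced to be $0$ because $q$ contains no words $x_jy_k$ or $y_kx_j$ (those live in $\lambda$). A $3\times 3$ block matrix with these two PSD overlapping blocks and a zero corner need not be PSD (already for scalars: all five determined entries equal to $1$ gives determinant $-1$), so no amount of generic evaluation or leading-term analysis of $\cM\cW$ will show $\cM\succeq 0$; those arguments address constancy of $\cM$, which is not the issue once the degree bounds are in hand, not its positivity.

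The paper's resolution is precisely a completion argument: by Parrott's theorem (in the form of Timotin's note \cite{T}), the zero corner can be replaced by some $Q$ so that the completed matrix $\widehat{\cM}\succeq 0$; then $\widehat{\cM}=\cR^*\cR$, $\Lambda=\cR\cW$, and the discrepancy $\cW^*(\widehat{\cM}-\cM)\cW=\sum_{j,k}Q_{jk}x_jy_k+Q_{jk}^*y_kx_j$ is itself a hermitian $xy$-pencil, so it is subtracted from the bi-affine part to form the final $\lambda$. In other words, $\lambda$ must be allowed to differ from the bi-affine part of $p$ in its $x_jy_k$ and $y_kx_j$ coefficients; your proposal does not permit this and has no substitute for the Parrott step. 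Your suggestion of exploiting positivity of the mixed defect on the constrained locus $\{h_ik_j=0=k_jh_i\}$ is a reasonable instinct for where extra information might come from, but you do not extract anything from it, and the paper shows it is unnecessary: the two separate Hessians plus the completion theorem suffice.
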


A proof of Theorem \ref{t:introxyconvexp} is contained in the proof of Proposition 
\ref{p:more-is-true} given in Section 3.

\subsubsection{$a^2$-convexity}
 To maintain consistency with the literature, we now switch
 to freely noncommuting variables $a_1,\dots,a_\vmu$ and 
 $x_1\, \dots,x_{\vnu}.$
 A $d\times d$ matrix-valued  hermitian  polynomial  $p(a,x)$ is \df{convex in $x$}
  if for each positive integer $n,$
  each   $A \in \mathbb S_n(\C^\mu),$ each $X,Y \in \mathbb S_n(\C^\mu)$ and 
  each   $0 < t < 1,$ one has 
\[
  p(A, tX + (1-t)Y) \preceq t p(A,X) + (1-t)p(A,Y).
\]
  A canonical  example of a  convex in $x$ 
  polynomial is a hermitian \df{linear in $x$ pencil}; 
 that is, a hermitian polynomial that is affine linear in $x.$

 There is a fruitful alternate characterization of convexity in $x.$ 
 A tuple $((A,X),V)$ where  $(A,X)\in \SSnmu\times\SSnmu$
 and $V:\CC^m\to\CC^n$  is an  isometry 
 is an \df{$a^2$-pair} if $V^*A_i^2V= (V^*A_iV)^2$ 
 for each $1\le i\le \vmu.$ Equivalently 
 $((A,X),V)$ is an $a^2$-pair if
 $\operatorname{ran} V$ reduces $A.$ 
  As we will see in  Proposition~\ref{thm:partialconvexity}, 
 a hermitian
  polynomial $p$ is convex in $x,$ or \df{$a^2$-convex},  if  and only if 
\[
 p(V^*(A,X)V) \preceq (I_d\otimes V^*)p(A,X) (I_d\otimes V)
\]
 for all $a^2$-pairs $((A,X),V).$
  Theorem~\ref{t:introxyconvexp} and Theorem~\ref{thm:partialconvexpolys} below --
 the latter of
  which is a matrix polynomial version of \cite[Theorem~1.5]{HHLM08} 
 and \cite[Corollary~1.3]{JKMMP21} -- 
 are the main results of this article.

\begin{theorem}
\label{thm:partialconvexpolys}
 Suppose $p(a,x)$ is a $d\times d$ matrix-valued hermitian polynomial.  If 
$p(a,x)$ is convex in $x,$  then there exist 
 a $d\times d$ matrix-valued hermitian linear in $x$ pencil $L,$
 a positive integer $N$ and a $N\times d$  matrix-valued 
 polynomial $\Lambda$ that is linear in $x$ such that
\[
  p(a,x) = L(a,x) + \Lambda(a,x)^* \Lambda(a,x).
\]
 In particular, $p$ has degree at most two in $x$ and
 $\cD_{-p}$ is the feasible
 set of the affine linear in $x$ matrix inequality
\[
 \begin{pmatrix} I & \Lambda(a,x)\\ \Lambda(a,x)^* & - L(a,x)
 \end{pmatrix}  \succeq 0.
\]
 
 The converse is evidently true.
\end{theorem}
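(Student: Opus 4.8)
The plan is to adapt to the matrix-valued, partially convex setting the argument of \cite{HHLM08} that convex free polynomials have degree at most two. First I would translate convexity in $x$ into a positivity statement for the second directional derivative in the $x$ variables. For each fixed $n$ and $A\in\SSnmu$, the map $X\mapsto p(A,X)$ is an $M_{dn}(\C)$-valued function on the real vector space $\SSnmu$, so ordinary convexity in $x$ is equivalent to the Hessian being positive semidefinite; that is, $p$ is convex in $x$ if and only if the matrix-valued free polynomial
\[
 q(a,x)[h]:=\frac{d^2}{dt^2}\Big|_{t=0}\,p(a,x+th),
\]
which is homogeneous of degree two in the auxiliary tuple $h$, satisfies $q(A,X)[H]\succeq 0$ for all $n$ and all $A,X,H\in\SSnmu$. (Alternatively one may start from the $a^2$-pair reformulation of Proposition~\ref{thm:partialconvexity}.)

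The crux of the proof is the degree bound $\deg_x p\le 2$, and this is the step I expect to be the main obstacle. The idea is to organize $q$ in a ``middle matrix'' form $q=V^*\,Z\,V$ in the sense of \cite{HHLM08}: $V=V(a,x)[h]$ is the vector of the finitely many distinct monomials of $q$ that are linear in $h$, graded by their degree in $x$, and $Z=Z(a,x)$ is a hermitian matrix with entries polynomial in $(a,x)$ which, because $q$ is a Hessian, carries a specific symmetry. Since distinct noncommutative monomials are generically linearly independent under matrix evaluation, $q\succeq 0$ on all tuples forces $Z(A,X)\succeq 0$ on a Zariski-dense set of tuples, hence for all $(A,X)$. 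One then runs the grading argument of \cite{HHLM08}: positivity of $Z$ together with its Hessian symmetry and the filtration of $V$ by $x$-degree forces $Z$ to be supported on the part of $V$ of degree zero in $x$, so that $q$ does not depend on $x$ and $p$ has degree at most two in $x$. Two features demand care here. First, the coefficients now live in $M_d(\C)$, so the ``middle matrix is constant'' computation must be carried out block-wise. Second, the extra variables $a$ mean the available grading is only by degree in $x$: monomials in $a$ of every degree coexist at each level of the filtration, so the generic tuples certifying $Z\succeq0$ must be chosen with $X$ generic while $A$ ranges freely, and one must keep track of which monomials $m(a,x)h_i$ can be coupled nontrivially by $Z$.

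Granting $\deg_x p\le 2$, the conclusion is routine. Then $q(a,x)[h]$ is independent of $x$ and equals twice the part of $p$ that is homogeneous of degree two in $x$, evaluated with $x$ replaced by $h$; call this polynomial $q(a)[h]$. It is a globally positive semidefinite matrix-valued free polynomial in $(a,h)$, so by the sum-of-hermitian-squares representation of such polynomials (see \cite{HHLM08,JKMMP21} and the references therein) one has $q(a)[h]=R(a)[h]^*R(a)[h]$ for some matrix-valued free polynomial $R$; comparing top-degree-in-$h$ terms, which cannot cancel in a sum of squares, shows $R$ may be taken homogeneous of degree one in $h$. Now set $\Lambda(a,x):=\tfrac{1}{\sqrt2}\,R(a)[x]$, the polynomial obtained from $R$ by replacing the $h$ variables by the $x$ variables; it is an $N\times d$ matrix-valued polynomial that is homogeneous of degree one in $x$, and $\Lambda(a,x)^*\Lambda(a,x)$ equals the degree-two-in-$x$ part of $p$. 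Hence $L(a,x):=p(a,x)-\Lambda(a,x)^*\Lambda(a,x)$ is affine linear in $x$, and it is hermitian because $p$ and $\Lambda^*\Lambda$ are. This is the asserted decomposition; the Schur complement reformulation of $\cD_{-p}$ is immediate, and the converse follows since $(A,X)\mapsto\Lambda(A,X)^*\Lambda(A,X)$ is convex in $X$ whenever $X\mapsto\Lambda(A,X)$ is affine linear.
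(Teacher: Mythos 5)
Your overall architecture (degree bound in $x$, then factor the quadratic-in-$x$ part) matches the paper's, and the second half of your argument is sound: once $\deg_x p\le 2$, the Hessian $q(a)[h]$ is a globally positive matrix polynomial in the hermitian variables $(a,h)$, the factorization theorem of \cite{M} gives $q(a)[h]=R(a)[h]^*R(a)[h]$, and the homogeneity argument correctly forces $R$ to be linear in $h$; renaming $h\to x$ is a legitimate formal substitution. This route differs from the paper's, which instead writes the quadratic part in border-vector/middle-matrix form $V(a)[x]^*\cZ(a)V(a)[x]$, proves $\cZ(\alpha)\succeq 0$ via the $a^2$-pair dilations together with a linear-independence/spanning argument (Propositions~\ref{rem:betterlinindep} and \ref{p:BVMM}), and then factors the middle matrix. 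Your version is more direct for this step, at the cost of not producing the explicit form $\Lambda=F(a)V(a)[x]$.

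The genuine gap is the degree bound itself, which you correctly identify as the crux but do not prove. Your plan is to redo the middle-matrix grading argument of \cite{HHLM08} in the matrix-coefficient, partially convex setting, and the difficulties you flag (block-wise computations, the filtration being only by $x$-degree while $a$-monomials of every degree coexist, the choice of generic tuples certifying $Z\succeq0$) are exactly the technical content that made \cite[Theorem~1.5]{HHLM08} and \cite[Corollary~1.3]{JKMMP21} nontrivial even for scalar coefficients; a sketch that defers them is not a proof. The paper avoids all of this with a one-line reduction (Proposition~\ref{p:a2convex-deg2}): for $\gamma\in\C^d$ set $p_\gamma=\sum_w(\gamma^*p_w\gamma)w$. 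Convexity in $x$ passes from $p$ to $p_\gamma$, the known scalar result gives $\deg_x p_\gamma\le 2$, and since the field is $\C$, every nonzero coefficient $p_w$ admits a $\gamma$ with $\gamma^*p_w\gamma\ne 0$, so no word of $x$-degree greater than two can appear in $p$. You should either invoke the scalar partial-convexity degree bound via this compression trick, or actually carry out the adapted middle-matrix argument in full; as written, the central step of your proof is missing.
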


A  proof of Theorem \ref{thm:partialconvexpolys} is given in 
 Section~\ref{s:a2-convexity}. 
Proposition~\ref{p:more-is-true} below
 describes the relationship between 
$xy$-convexity and separate convexity in $x$ and $y$.
It also  extends
 \cite[Theorem~1.4]{JKMMP21} to both several $x$ and $y$
 variables and matrix-valued polynomials.

\begin{proposition}
 \label{p:more-is-true} 
Let $p(x,y)$ be a $d \times d$ matrix-valued hermitian polynomial. The following 
statements are equivalent.
\begin{enumerate}[(i)]
\item \label{i:more1} $p$ is $xy$-convex
\item \label{i:more2} $p$ is convex in $x$ and $y$ separately. 
\item \label{i:more3} $p$ has the form given in equation \eqref{eq:xy-conv}.
\end{enumerate}
In particular, $p$ is $xy$-convex if and only if $p$ is  
 convex in $x$ and $y$ separately.
\end{proposition}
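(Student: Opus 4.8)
The plan is to establish the cycle of implications $(\ref{i:more3})\Rightarrow(\ref{i:more1})\Rightarrow(\ref{i:more2})\Rightarrow(\ref{i:more3})$, after which the final ``in particular'' statement is immediate. The implication $(\ref{i:more3})\Rightarrow(\ref{i:more1})$ is the ``easily seen'' converse asserted in Theorem~\ref{t:introxyconvexp}: if $p=\lambda+\Lambda^*\Lambda$ with $\lambda$ a hermitian $xy$-pencil and $\Lambda$ an $xy$-pencil, then along any $xy$-pair $((X,Y),V)$ the pencils $\lambda$ and $\Lambda$ behave affinely under compression precisely because the defining identity $V^*(X_iY_j)V=V^*X_iVV^*Y_jV$ linearizes the bilinear terms, and $\Lambda^*\Lambda$ is handled by the standard operator-convexity trick $(I_d\otimes V)^*\Lambda(X,Y)^*\Lambda(X,Y)(I_d\otimes V)\succeq \big(\Lambda(X,Y)(I_d\otimes V)\big)^*\big(\Lambda(X,Y)(I_d\otimes V)\big)=\Lambda(V^*(X,Y)V)^*\Lambda(V^*(X,Y)V)$, the last equality again using that $\Lambda$ is an $xy$-pencil and $V$ is an $xy$-isometry. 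The implication $(\ref{i:more1})\Rightarrow(\ref{i:more2})$ is also routine: an $a^2$-pair (or the analogous $x$-only or $y$-only midpoint-convexity test) is a special case of an $xy$-pair. Concretely, to test convexity in $x$ one fixes the $y$-variables and takes $V$ of the form that realizes the midpoint $tX+(1-t)Y$; one checks that on such configurations the commutation constraint $V^*(X_iY_j)V=V^*X_iVV^*Y_jV$ holds automatically because the relevant compression affects only the $x$-slot while the $y$-slot is untouched (up to the direct-sum-then-compress presentation of convex combinations). Thus $xy$-convexity of $p$ forces the separate midpoint convexity inequalities in $x$ and in $y$, which is $(\ref{i:more2})$.

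The substance of the proposition is the implication $(\ref{i:more2})\Rightarrow(\ref{i:more3})$, and this is where I would spend the effort. The natural route is to invoke Theorem~\ref{thm:partialconvexpolys} twice. Since $p(x,y)$ is convex in $x$ (treating $y$ as the ``$a$'' parameters), Theorem~\ref{thm:partialconvexpolys} gives $p(x,y)=L(x,y)+\Lambda(x,y)^*\Lambda(x,y)$ where $L$ is hermitian and linear in $x$ and $\Lambda$ is linear in $x$; in particular $p$ has degree at most two in $x$. Symmetrically, convexity in $y$ gives that $p$ has degree at most two in $y$. Combining the two degree bounds, $p$ is a hermitian polynomial of degree at most two in $x$ and at most two in $y$ jointly. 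I would then argue that the only monomials of degree $\le 2$ in $x$ and $\le 2$ in $y$ that can appear, once one also uses that the pure $x^2$-part and pure $y^2$-part of $p$ must each be of the ``sum of Hermitian squares'' shape coming from the two applications of Theorem~\ref{thm:partialconvexpolys}, are: the $xy$-pencil monomials $1,\ x_j,\ y_k,\ x_py_q,\ y_qx_p$, together with monomials of the form $x_px_q$, $y_py_q$, $x_py_qx_r$, $y_px_qy_r$, $x_px_qy_r$, etc., that can be absorbed into a single expression $\Lambda(x,y)^*\Lambda(x,y)$ with $\Lambda$ an $xy$-pencil (i.e.\ affine in $x$ and in $y$ separately, so its monomials are among $1,x_j,y_k,x_py_q,y_qx_p$ — and actually one shows the quadratic cross terms like $x_px_qy_r$ cannot survive). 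The cleanest way to pin this down is: write $p=\lambda+\text{(remainder)}$ where $\lambda$ collects exactly the $xy$-pencil part of $p$; show the remainder $r=p-\lambda$ is hermitian, is ``$xy$-convex with zero linear part,'' hence $r(V^*(X,Y)V)\preceq (I_d\otimes V)^*r(X,Y)(I_d\otimes V)$ along $xy$-pairs and $r$ vanishes to second order, and then run a middle-matrix / Gram-representation argument (as in the scalar case of \cite{JKMMP21} and the convex-polynomial argument of \cite{HHLM08}) to conclude $r=\Lambda^*\Lambda$ with $\Lambda$ an $xy$-pencil.

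The main obstacle, and the step requiring genuine care rather than citation, is controlling the \emph{joint} structure: separate degree-two bounds in $x$ and in $y$ do not by themselves forbid mixed monomials such as $x_i x_j y_k y_\ell$ or $y_k x_i x_j y_\ell$, and one must show that joint $xy$-convexity (equivalently, both separate-convexity certificates holding simultaneously with compatible Gram data) squeezes the admissible monomials down to the $xy$-pencil set. I expect this is handled by combining the two representations $p=L+\Lambda^*\Lambda=L'+\Xi^*\Xi$ (from convexity in $x$ and in $y$ respectively) and exploiting that $\Lambda$ is linear in $x$ while $\Xi$ is linear in $y$: matching the pure-degree components on both sides forces the ``hidden'' quadratic pieces of $\Lambda^*\Lambda$ in the $y$-variables (and of $\Xi^*\Xi$ in the $x$-variables) to be trivial, leaving only terms that are simultaneously affine in $x$ and affine in $y$. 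Once the monomial support of the square part is confined to $\{1,x_j,y_k,x_py_q,y_qx_p\}$, a single Gram/middle-matrix diagonalization produces the desired $\Lambda$, and absorbing the leftover genuinely-linear (degree one) terms into $\lambda$ completes $(\ref{i:more3})$. Finally, since $(\ref{i:more1})\Leftrightarrow(\ref{i:more3})$ is now established and $(\ref{i:more1})\Leftrightarrow(\ref{i:more2})$ follows from the cycle, the concluding equivalence ``$p$ is $xy$-convex iff $p$ is convex in $x$ and $y$ separately'' drops out, and as noted this simultaneously proves Theorem~\ref{t:introxyconvexp}.
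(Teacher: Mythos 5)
Your overall architecture matches the paper's: the cycle (iii)$\Rightarrow$(i)$\Rightarrow$(ii)$\Rightarrow$(iii), with the easy directions handled exactly as in the text (the paper proves (i)$\Rightarrow$(ii) by citing \cite[Lemma~4.3]{JKMMP21} and (iii)$\Rightarrow$(i) by the same compression computation you describe), and with (ii)$\Rightarrow$(iii) driven by two applications of Theorem~\ref{thm:partialconvexpolys}. The paper packages the monomial-support control you ask for as Corollary~\ref{prop:infoabtp}: convexity in each variable separately forces degree at most two in each of $x$ and $y$ \emph{and} excludes the words $x_jx_\ell y_ky_m$ and $y_my_kx_\ell x_j$ (via the observation that the square root $\Lambda$ from the $x$-certificate must have degree at most one in $y$, else $p(x,ty)$ has degree $\ge 4$ in $t$). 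Note, however, that your parenthetical claim that ``quadratic cross terms like $x_px_qy_r$ cannot survive'' is wrong: such words, as well as $y_kx_jx_\ell y_m$ and $x_jy_kx_\ell y_m$, are permitted members of $\cL$ and genuinely occur; only the two excluded families above are killed.

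The genuine gap is at the gluing step. After writing $p=\ell+q$ with $q$ the part of degree at least two, the degree-$\ge 2$ part is represented as $q=\cW^*\cM\cW$ with border vector $\cW=(x,v,y)^T$, $v$ listing the degree-$(1,1)$ words. Positivity of the two partial Hessians $p_{xx}[x]$ and $p_{yy}[y]$ gives positivity only of the two \emph{overlapping principal submatrices} $\begin{pmatrix} F_0^*F_0 & F_0^*F_1\\ F_1^*F_0 & P\end{pmatrix}$ and $\begin{pmatrix} P & G_1^*G_0\\ G_0^*G_1 & G_0^*G_0\end{pmatrix}$; this does \emph{not} imply $\cM\succeq 0$, because the $(x,y)$ corner of $\cM$ is forced to be $0$ by the absence of $x_jy_k$ words in $q$, and a $3\times3$ block matrix with positive overlapping $2\times2$ blocks and zero corner need not be positive. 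Saying ``a single Gram/middle-matrix diagonalization produces the desired $\Lambda$'' skips the two essential observations: (a) the $(x,y)$ corner may be changed freely, since altering it only perturbs the coefficients of $x_jy_k$ and $y_kx_j$, which are $xy$-pencil words absorbable into $\lambda$; and (b) Parrott's theorem (\cite[Proposition~1]{T}) guarantees that some choice of corner $Q$ makes the completed matrix $\widehat{\cM}$ positive semidefinite. Your alternative suggestion of ``combining the two representations $p=L+\Lambda^*\Lambda=L'+\Xi^*\Xi$ and matching pure-degree components'' does not obviously substitute for this completion argument, since the two sums of squares need not share compatible Gram data. Without (a) and (b), the passage from separate positivity of the two partial Hessians to a single factorization $q+(\text{pencil correction})=\Lambda^*\Lambda$ is unjustified.
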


A proof of Proposition \ref{p:more-is-true} is given in Section 3.

\begin{remark}\rm
 An example in the appendix of the arxiv version of \cite{JKMMP21}
 shows that there is not a local version of Proposition~\ref{p:more-is-true}.
 That is, as a local statement, separate convexity need not imply
 $xy$-convexity.
\end{remark}

\section{Partially convex hermitian matrix-valued NC polynomials}
\label{s:a2-convexity}
 This section contains a proof of Theorem~\ref{thm:partialconvexpolys}
 and is organized as follows.
 Subsection~\ref{s:a2convex} presents alternate formulations of $a^2$-convexity.
 Needed versions of  Amitsur's no polynomial identities results 
 are collected in  Subsection~\ref{s:amitsur}. 
 The border vector middle matrix representation for a type of Hessian
 for polynomials in $a,x$ of degree two in $x$ is reviewed in
 Subsection~\ref{s:BVMM}.
  The proof of Theorem~\ref{thm:partialconvexpolys}
 concludes in Subsection~\ref{s:a2endproof}. Subsection~\ref{s:a2cors} 
 contains two corollaries 
 that apply to $xy$-convex polynomials.

\subsection{Alternate formulations of convexity}
\label{s:a2convex}
The proof of Theorem~\ref{thm:partialconvexpolys} makes use of the  
following characterization of $a^2$-convex polynomials.
It parallels \cite[Proposition~4.1]{JKMMP21} for
$xy$-convex polynomials for $\vmu=1$ and, to some extent, appears as 
\cite[Proposition~1.5]{JKMMP21}. It also borrows liberally
 from the ideas in \cite{royal}.

\begin{proposition}
\label{thm:partialconvexity}
 For a $d \times d$ matrix-valued hermitian polynomial $p(a,x),$ 
  the following statements are equivalent. 
\begin{enumerate}[(i)]
\item  \label{i:a2convex1}
 The polynomial $p$ is convex in $x;$
\item \label{i:a2convex2}
  If $((A,X),V)$ is an $a^2$-pair, then 
\[
(I_d \otimes V)^*\, p(A,X) \, (I_d \otimes V)  \succeq p(V^*(A, X)V);
\]
\item \label{i:a2convex3}
  For each tuple $(A,X) \in \SSnmu\times\SSnmu,$
each positive integer $m$ and 
 all tuples $\alpha,\delta\in \mathbb S_m(\C^\mu)$
and $\beta \in M_{n,m}(\C^\mu),$
\[
 (I_d \otimes W)^* p(R,S) (I_d \otimes W) 
\succeq  p \left(W^* (R,S) W \right)
\] 
where $W^* = \begin{pmatrix} I_n & 0 \end{pmatrix} \in M_{n, n+m}(\C),$ 
\[
R = \left(\begin{pmatrix} A_1 & 0 \\ 0 &  \alpha_1 \end{pmatrix},\dots, \begin{pmatrix} 
A_\mu & 0 \\ 0 &  \alpha_\mu \end{pmatrix}\right) 
 \in \mathbb S_{n+m}(\C^\mu) 
\]
and
\[
S = \left(\begin{pmatrix} X_1 & \beta_1 \\ \beta_1^* &  \delta_1 \end{pmatrix},\dots,
\begin{pmatrix} X_\mu & \beta_\mu \\ \beta_\mu^* &  \delta_\mu \end{pmatrix}\right) 
 \in  \mathbb S_{n+m}(\C^\mu).
\]
\end{enumerate}
\end{proposition}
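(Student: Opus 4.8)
The plan is to establish the chain of implications (i) $\Rightarrow$ (iii) $\Rightarrow$ (ii) $\Rightarrow$ (i), so that all three statements are seen to be equivalent. The implication (iii) $\Rightarrow$ (ii) is essentially trivial once one checks that the pair $((R,S),W)$ appearing in (iii) is an $a^2$-pair: indeed $\operatorname{ran} W = \CC^n \oplus 0$ is a reducing subspace for each $R_i$ since $R_i$ is block diagonal in the decomposition $\CC^{n+m} = \CC^n \oplus \CC^m$, so $W^*R_i^2 W = (W^*R_iW)^2$, and then $W^*(R,S)W = (A,X)$. Thus (ii) applied to this particular family of $a^2$-pairs is exactly (iii). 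So the real content is (i) $\Rightarrow$ (iii) and (ii) $\Rightarrow$ (i).

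For (ii) $\Rightarrow$ (i): given $(A,X), (A,Y) \in \SSnmu \times \SSnmu$ and $0<t<1$, I would build the standard ``doubling'' dilation. Set $\tilde A = (A,A) = \bigl(\begin{smallmatrix} A&0\\0&A\end{smallmatrix}\bigr)$ (coordinatewise) and $\tilde X = \bigl(\begin{smallmatrix} X&0\\0&Y\end{smallmatrix}\bigr)$ acting on $\CC^n \oplus \CC^n$, and take $V:\CC^n \to \CC^{2n}$ to be the isometry $V = \bigl(\begin{smallmatrix}\sqrt{t}\,I\\ \sqrt{1-t}\,I\end{smallmatrix}\bigr)$. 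Since $\tilde A$ is block diagonal, $\operatorname{ran}V$ need not reduce it in general — but $\tilde A_i = A_i \oplus A_i$ is a scalar-type block, so $V^*\tilde A_i^2 V = t A_i^2 + (1-t)A_i^2 = A_i^2 = (V^*\tilde A_i V)^2$, confirming $((\tilde A,\tilde X),V)$ is an $a^2$-pair. One computes $V^*\tilde X_j V = tX_j + (1-t)Y_j$, hence $V^*(\tilde A,\tilde X)V = (A, tX+(1-t)Y)$, and $(I_d\otimes V)^* p(\tilde A,\tilde X)(I_d\otimes V) = tp(A,X) + (1-t)p(A,Y)$ because $p(\tilde A,\tilde X) = p(A,X)\oplus p(A,Y)$. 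So (ii) delivers exactly the midpoint-type convexity inequality; standard arguments pass from this ``$t$-weighted'' form to convexity (or one simply notes the inequality as stated already is convexity in $x$ with parameter $t$).

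For (i) $\Rightarrow$ (iii): this is the substantive direction and the main obstacle. Here I would exploit that convexity in $x$ means $p$ has degree at most two in $x$ for each fixed $a$ along any line, and that the second-order term in $x$ is a positive semidefinite quadratic form (the Hessian in $x$ is a completely positive-type object). Concretely, fix $(A,X)$ and consider, for $\alpha,\delta \in \mathbb S_m(\C^\mu)$ and $\beta \in M_{n,m}(\C^\mu)$, the curve $t \mapsto p(R, S_t)$ where $S_t$ has off-diagonal blocks $t\beta$ and diagonal blocks $X, t^2\delta$ (or a similar scaling chosen so that $W^*S_tW = X$ for all $t$ while $W^*(R,S_t)W$ stays at $(A,X)$); convexity in $x$ of $p$, applied to the line segment in the $x$-variables, forces the relevant difference $(I_d\otimes W)^*p(R,S)(I_d\otimes W) - p(W^*(R,S)W)$ to equal a sum of Hermitian squares evaluated at these matrices, hence to be positive semidefinite. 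The technical work is to verify that the compression $W^*(\cdot)W$ interacts with products of $R$'s and $S$'s exactly so that the cross terms collapse — this is where the block structure ($R$ block-diagonal, so $\operatorname{ran}W$ reduces $R$) is used, mirroring the role of the $a^2$-pair condition. I expect this verification — tracking how $W^*R^wS^{w'}R^{w''}W$ decomposes and matching it against $p(A,X)$ — to be the most delicate bookkeeping, but it is the same mechanism as in \cite[Proposition~4.1]{JKMMP21} and \cite{royal}, adapted to several $a$-variables and matrix coefficients, so no genuinely new idea should be needed.
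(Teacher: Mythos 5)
Your implication (ii)~$\Rightarrow$~(i) is exactly the paper's argument (the dilation $A\oplus A$, $X\oplus Y$ compressed by $V=\begin{pmatrix}\sqrt{t}\,I & \sqrt{1-t}\,I\end{pmatrix}^*$), but the other two legs of your chain have genuine problems.

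First, your argument for (iii)~$\Rightarrow$~(ii) proves the wrong implication. Verifying that $((R,S),W)$ is an $a^2$-pair and then ``applying (ii) to this particular family'' shows that (iii) is a \emph{special case} of (ii), i.e., it proves (ii)~$\Rightarrow$~(iii). What (iii)~$\Rightarrow$~(ii) requires is the converse reduction: that an \emph{arbitrary} $a^2$-pair $((A,X),V)$ can be brought to the form $(W,R,S)$. The paper does this by decomposing $\C^n=M\oplus M^\perp$ with $M=\operatorname{ran} V$; since $M$ reduces each $A_j$, the matrices of $V$, $A_j$, $X_j$ in that decomposition are precisely $W$, $R_j$, $S_j$. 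Without this step your cycle (i)~$\Rightarrow$~(iii)~$\Rightarrow$~(ii)~$\Rightarrow$~(i) is broken.

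Second, your sketch of (i)~$\Rightarrow$~(iii) is missing the key idea and, as written, is close to circular: positivity of the partial Hessian and a sum-of-Hermitian-squares representation are what the paper later \emph{derives from} item~(iii) (via positivity of the middle matrix), so they cannot be assumed in proving (i)~$\Rightarrow$~(iii). The actual mechanism is an elementary symmetrization, not a curve $t\mapsto S_t$ or cross-term bookkeeping. Let $\widehat S$ denote $S$ with $\beta$ replaced by $-\beta$. Then $\tfrac12(S+\widehat S)$ is block diagonal, so convexity in $x$ gives
\[
 p(A,X)\oplus p(\alpha,\delta)=p\bigl(R,\tfrac12(S+\widehat S)\bigr)\preceq \tfrac12\bigl(p(R,S)+p(R,\widehat S)\bigr),
\]
and compressing by $I_d\otimes W$ yields $p(A,X)\preceq \tfrac12 (I_d\otimes W)^*\bigl[p(R,S)+p(R,\widehat S)\bigr](I_d\otimes W).$ Finally, with $U=I_n\oplus(-I_m)$ one has $(R,\widehat S)=U^*(R,S)U$ and $UW=W$, so the two compressed summands coincide and the inequality of (iii) follows. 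You should replace your Hessian heuristic with this argument (or something equivalent) for the proof to be complete.
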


\begin{proof}
 To prove \ref{i:a2convex2} implies \ref{i:a2convex1},
   let  $A, X,Y \in \SSnmu$
  and $t \in [0,1]$ be given.  Let 
\[
 \widehat A = \begin{pmatrix} A & 0 \\ 0 & A
\end{pmatrix}, \ \ \
  \widehat X = \begin{pmatrix}  X & 0 \\0 & Y  \end{pmatrix},  
  \ \ \ V=  \begin{pmatrix}  
      \sqrt{t} \, I_n &  \sqrt{1 - t} \, I_n \end{pmatrix}^*.
\] 
  In particular, $((\widehat A,\widehat X),V)$ is
 an $a^2$-pair. Thus,
\begin{equation*}
\begin{split}
 p(A, tX + (1-t) Y) 
 & = p(V^*( \widehat A, \widehat X)V) \\
 & \preceq (I_d \otimes V)^*\, p(\widehat A,\widehat X)\, (I_d \otimes V)
\\  & = (I_d \otimes V)^* \begin{pmatrix} p(A, X) & 0 \\ 0 & p(A, Y) \end{pmatrix} (I_d \otimes V) \\
& = t p(A,X) + (1-t) p(A, Y),
\end{split}
\end{equation*}
 where the inequality is a consequence of the hypothesis.
 Hence $p$ is convex in $x.$

Now suppose item~\ref{i:a2convex3} holds and let an $a^2$-pair
 $((A,X),V)$ be given. 
 Since $V: \C^m \rightarrow \C^n$ is an isometry whose range $M$ reduces $A_j,$ 
 the matrix representations of $V,$ $A_j$ and $X_j$ with respect to the decomposition
  $\C^n = M \oplus M^{\perp}$ take the forms
\[
 \begin{pmatrix}  I_M \\ 0  \end{pmatrix}, \begin{pmatrix} 
  A_j\restriction_M & 0 \\ 0 & 
   A_j\restriction_{M^\perp} \end{pmatrix} 
\text{ and } \begin{pmatrix}  P_{M}  X_j P_M^* & Y_j \\ Y_j^* & P_{M^{\perp}} X_j P_{M^\perp}^* \end{pmatrix} 
\]
respectively, where $P_M$ denotes the orthogonal projection of $\C^n$ onto $M.$ 
The conclusion of item~\ref{i:a2convex2} now follows by identifying $M$ with $\C^m$ 
and observing  that, under this identification, the operators $V,$ $A$ 
 and $X$ have the same form  as $W,$ $R$ and $S$ in the hypothesis.
 Hence item~\ref{i:a2convex3} implies item~\ref{i:a2convex2}. 

 It remains to prove \ref{i:a2convex1} implies \ref{i:a2convex3}. 
 To this end, let
\[
\widehat {S}  
= \left(\begin{pmatrix} X_1 &- \beta_1 \\ -\beta_1^* &  \delta_1 \end{pmatrix},\dots,
\begin{pmatrix} X_\mu & -\beta_\mu \\ -\beta_\mu^* &  \delta_\mu \end{pmatrix}\right).
\]
By the convex in $x$ hypothesis, it follows that 
\begin{equation}
 \label{e:a2convex1}
   \begin{pmatrix}  p(A, X) & 0 \\ 0 & p(\alpha, \delta) \end{pmatrix} = 
   p \left(R, \frac{1}{2} (S + \widehat{S})\right) \preceq
     \frac1 2 (p(R,S) + p(R, \widehat{S})).
\end{equation}
Multiplying the inequality  of equation~\eqref{e:a2convex1}
  by $(I_d \otimes W)^*$ on the left and $(I_d \otimes W)$ on the right gives
\[
p(A,X) = p(W^*(R,S)W) \preceq \frac{1}{2} (I_d \otimes W)^*
 \, [p(R,S) + p(R, \widehat{S})] \, (I_d \otimes W).
\]
Thus, to complete the proof, it suffices to show.
\begin{equation}
\label{e:complete}
(I_d \otimes W)^*\, p(R,\widehat{S}) \,  (I_d \otimes W) 
  = (I_d \otimes W)^* \, p(R,S) \, (I_d \otimes W).
\end{equation}
To this end, let 
\[
U=\begin{pmatrix} I & 0 \\0 & -I \end{pmatrix}
\]
and note  $(R,\widehat{S}) = U^* (R,S)U.$ Consequently, 
\[
 p(R,\widehat{S})  = (I_d \otimes U^*) p(R,S) (I_d \otimes U), 
\]
and  equation~\eqref{e:complete} follows.
\end{proof}

\subsection{Faithful representations}
\label{s:amitsur}
\begin{proposition}
 \label{p:a2convex-deg2}
  Suppose $p(a,x)$ is a
 hermitian polynomial. If $p$ is convex in $x,$ then
 the degree of $p$ in $x$ is at most two. 
\end{proposition}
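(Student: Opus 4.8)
The plan is to argue by contradiction: suppose $p(a,x)$ is convex in $x$ but has degree $k \ge 3$ in $x$. The key point is that convexity in $x$ should force the ``second difference'' (discrete Hessian) of $p$ in the $x$-variables to be positive semidefinite along $a^2$-pairs, and yet a genuine degree-$k$ term with $k \ge 3$ produces a homogeneous-of-degree-$k$ piece in that Hessian that cannot have a definite sign. So I would first set up the directional second-difference operator. Fixing $a = A$ and writing, for a direction $h = (h_1,\dots,h_\mu)$,
\[
 q(A,x)[h] \;=\; \frac{\partial^2}{\partial s^2}\Big|_{s=0} p(A, x + s h),
\]
I would record that $q(A,x)[h]$ is a polynomial in $(x,h)$ that is homogeneous of degree two in $h$, that $p$ convex in $x$ is equivalent (by the standard scalarization of matrix convexity, cf.\ the manipulations in the proof of Proposition~\ref{thm:partialconvexity}) to $q(A,X)[H] \succeq 0$ for all $A$ with $\operatorname{ran}$ appropriate and all $X,H$ of the relevant sizes — or, more carefully, using the $a^2$-pair formulation, that testing on tuples where the range reduces $A$ suffices.

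Next I would isolate the top-degree-in-$x$ part. Write $p = \sum_{j} p_j$ where $p_j$ collects the monomials of degree exactly $j$ in the $x$-variables (with $a$-entries allowed), and let $k$ be the largest $j$ with $p_k \ne 0$; assume $k \ge 3$ for contradiction. Then in $q(A,x)[h]$ the part homogeneous of total degree $k$ in $(x,h)$ comes only from $p_k$, and it is exactly $\frac{\partial^2}{\partial s^2}|_{s=0} p_k(A, x+sh)$, a nonzero polynomial that is homogeneous of degree $k-2$ in $x$ and of degree $2$ in $h$. The strategy is then to exploit homogeneity: replace $x$ by $tx$ and $h$ by $h$; as $t \to \infty$, $q(A, tx)[h]$ is dominated by its top term $t^{k-2} \, q_k(A,x)[h]$ (where $q_k$ is the degree-$k$ piece), so $q(A,X)[H] \succeq 0$ for all large $t$ forces $q_k(A,X)[H] \succeq 0$; but $q_k$ is homogeneous of odd-or-even degree $k-2 \ge 1$ in $X$, so replacing $X$ by $-X$ gives $q_k(A,-X)[H] = \pm q_k(A,X)[H] \succeq 0$ as well, hence $q_k(A,X)[H] = 0$ when $k-2$ is odd, and when $k-2$ is even a further scaling/parity trick (or differentiating once more and rescaling) kills it; in all cases $q_k \equiv 0$ on a large enough family of test tuples.

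Finally I would convert ``$q_k$ vanishes on all matrix tuples of all sizes (with the reducing-range restriction on $A$)'' into ``$q_k = 0$ as a free polynomial'', which then contradicts $p_k \ne 0$. This is where the results of Subsection~\ref{s:amitsur} (the Amitsur-type ``no polynomial identities'' / faithful representation statements) enter: a free polynomial vanishing on all tuples from $\SSn$ for every $n$ is the zero polynomial, and the $a^2$-pair constraint only restricts $A$ to have a reducing subspace, which one can always arrange by taking $A$ block diagonal — crucially, $q_k$ is built from $p_k$ and hence any identity it satisfies pulls back to an identity on $p_k$. I expect the main obstacle to be the even-degree case in the parity argument: when $k$ is even, the naive $X \mapsto -X$ substitution does not immediately annihilate $q_k$, so one must instead argue that a nonzero homogeneous polynomial of even degree $\ge 2$ that is everywhere $\succeq 0$ would have to be a sum of hermitian squares by the matrix-valued real-Positivstellensatz-type results, forcing $p$ itself to already contain those squares at top degree — but then those squares have degree $\ge 4$ in $x$, and one iterates or directly inspects the border-vector/middle-matrix representation of Subsection~\ref{s:BVMM} to see the middle matrix cannot be positive semidefinite unless the degree is $\le 2$. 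Handling this cleanly, rather than the bookkeeping of which monomials land in $q_k$, is the real content.
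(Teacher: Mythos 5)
Your reduction to positivity of the second directional difference $q(A,X)[H]=\tfrac{\partial^2}{\partial s^2}\big|_{s=0}p(A,X+sH)$ is sound (and you do not need the $a^2$-pair restriction for it: convexity in $x$ is quantified over all $A,X,Y\in\SSnmu$, so the Hessian is positive semidefinite on \emph{all} tuples and Lemmas~\ref{lem:vanishingpoly} and \ref{cor:gsforus} apply directly). The extraction of the leading term via $X\mapsto tX$, $t\to\infty$, is also fine, and the odd-degree case closes as you describe. But the even-degree case is a genuine gap, not a loose end: when the top degree $k$ in $x$ is even and at least $4$, the leading piece $q_k(A,X)[H]$ is homogeneous of even degree $k-2\ge 2$ in $X$, and a nonzero polynomial of that form can perfectly well be positive semidefinite everywhere (any hermitian square is), so no scaling or sign-flip trick annihilates it. Your fallback --- that such a piece would be a sum of squares whose presence in $p$ is then somehow excluded by inspecting the middle matrix --- is precisely the hard step of the degree-two theorems of Helton--McCullough and \cite{HHLM08}: one must exploit the chevron structure of the middle matrix of the Hessian (zero diagonal blocks in the high-degree positions forcing the corresponding off-diagonal blocks to vanish), and nothing in your sketch supplies that argument. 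As written, the proposal proves the proposition only for odd top degree.

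For comparison, the paper does not reprove any of this. It scalarizes: for $\gamma\in\C^d$ it forms $p_\gamma=\sum_w(\gamma^*p_w\gamma)\,w$, notes that convexity of $p$ in $x$ passes to $p_\gamma$, invokes the already-established scalar-coefficient result \cite[Corollary~1.3]{JKMMP21} to bound $\deg_x p_\gamma$ by two, and then observes that for any word $w$ with $p_w\ne 0$ some $\gamma$ has $\gamma^*p_w\gamma\ne 0$ (this uses that the field is $\C$; over $\RR$ one would need $p_w$ hermitian or a polarization argument). If you want a self-contained route, you would need to import the full middle-matrix positivity argument rather than the parity heuristic; otherwise the compression-to-scalars reduction is the efficient path.
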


\begin{proof}
 Let $d$ denote the size of $p.$ Thus 
  $p = \sum_w p_w w$ for some $p_w\in M_d(\C).$
 For $\gamma  \in \C^d,$ 
define the polynomial $p_{\gamma}$ by 
  $p_\gamma =  \sum_w (\gamma^* p_w \gamma ) \, w.$ 
Since $p$ is hermitian, it follows that $p_\gamma$ is a 
hermitian polynomial with scalar coefficients. Also  convexity of $p$ in $x$ implies the convexity of 
$p_{\gamma}$ in $x.$
Hence, by \cite[Corollary~1.3]{JKMMP21},\footnote{The same
 result, but with real scalars, appears as \cite[Theorem~1.4]{HHLM08}}
 for each $\gamma\in \C^d,$ the degree of $p_\gamma$ in $x$ is at most two. 
 Suppose the word $w = w(a,x)$   is such that $p_w \neq 0.$ 
 Since the scalar field is $\C,$ it follows that
 there exists a $\gamma\in \C^d$ such that $\gamma^*p_w\gamma\ne 0.$
 Since $p_\gamma$ has degree at most  two in $x,$ it follows that
 $w(a,x)$ has degree at most two in $x.$ Hence $p$ has
  degree at most two in $x.$
\end{proof}

The following lemma is a variant of the Amitsur-Levitski Theorem.

\begin{lemma}
\label{lem:vanishingpoly}
 If  $p(a)$ is a polynomial of degree at most $m\ge 0$ in the 
 freely noncommuting variables
 $a_1,\dots,a_\mu$ and if there is an 
 $n\ge N(\mu,m) := \sum_{j=0}^{m} \mu^j$ and a nonempty  
 open set $\cU\subseteq \SSnmu$ such that
 $p(U)=0$ for all $U\in \cU,$ then $p=0.$
\end{lemma}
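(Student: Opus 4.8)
The plan is to prove the contrapositive in spirit: we want to show that a nonzero polynomial of degree at most $m$ cannot vanish on a nonempty open subset of $\SSnmu$ once $n$ is at least $N(\mu,m)$. The standard route is a dimension/faithfulness argument. First I would fix a nonzero $p(a)$ of degree at most $m$ and observe that it suffices to treat the scalar-coefficient case: writing $p = \sum_w p_w w$ with $p_w \in M_d(\C)$, pick a word $w_0$ with $p_{w_0}\neq 0$ and vectors $\xi,\eta\in\C^d$ with $\langle p_{w_0}\xi,\eta\rangle \neq 0$, and replace $p$ by the scalar polynomial $q(a) = \langle p(a)\xi,\eta\rangle = \sum_w \langle p_w\xi,\eta\rangle\, w$, which is still nonzero and of degree at most $m$; if $p$ vanished on $\cU$ then so would $q$. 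So from now on $p$ has scalar coefficients.

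Next, the key structural point: the words $w(a)$ in $a_1,\dots,a_\mu$ of length at most $m$ number exactly $\sum_{j=0}^m \mu^j = N(\mu,m)$, and I claim there is a single tuple $U^{(0)}\in\SSnmu$ with $n = N(\mu,m)$ at which the matrices $\{w(U^{(0)}) : |w|\le m\}$ are linearly independent in $M_n(\C)$. This is exactly a faithfulness statement for the ``truncated'' free algebra: one can take $U^{(0)}$ to be a suitable set of (self-adjoint) weighted shifts / nilpotent-plus-adjoint operators on the $N(\mu,m)$-dimensional space spanned by an orthonormal basis $\{e_w : |w|\le m\}$ indexed by words, arranged so that $a_i$ maps $e_w$ (essentially) to $e_{a_i w}$ when $|a_i w|\le m$ and kills it otherwise, then symmetrized to live in $\SSnmu$. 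Evaluating the words on this model and pairing against the basis vectors shows the word-matrices are linearly independent; hence $p(U^{(0)})\neq 0$ for our nonzero scalar $p$. (Alternatively one may cite the Amitsur construction / the standard fact that generic tuples give a faithful representation of the free algebra modulo words of length $> m$; the paper's phrasing ``variant of the Amitsur--Levitski Theorem'' signals this is the intended ingredient.)

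Finally I would upgrade ``$p$ does not vanish identically on $\SSnmu$ for $n = N(\mu,m)$'' to ``$p$ does not vanish on any nonempty open $\cU\subseteq\SSnmu$''. For this, fix an arbitrary $U\in\cU$ and an arbitrary $U^{(0)}$ as above, and consider the real-analytic (in fact polynomial) function $t\mapsto p\big(U + t(U^{(0)}-U)\big)$, or more simply the function $U'\mapsto p(U')$ restricted to a small ball around $U$. If $p$ vanished on all of $\cU$, then since $U'\mapsto p(U')$ is a (matrix-valued, hence entrywise) polynomial map on the real vector space $\SSnmu$ and $\cU$ has nonempty interior, $p$ would vanish identically on $\SSnmu$ — a real polynomial vanishing on an open set is zero. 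This contradicts $p(U^{(0)})\neq 0$. (If $n$ in the hypothesis is strictly larger than $N(\mu,m)$, one reduces to the case $n = N(\mu,m)$ by compressing: restricting $U\in\cU$ to a reducing subspace, or simply noting that the $N(\mu,m)$-level already witnesses nonvanishing and any $n\ge N(\mu,m)$ contains a direct-summand copy of an open set of $N(\mu,m)$-tuples.)

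The main obstacle is the middle step: constructing (or invoking) the single tuple $U^{(0)}\in \SSnmu$ of size exactly $N(\mu,m)$ on which all word-matrices of degree $\le m$ are linearly independent, and verifying the bookkeeping that the count $\sum_{j=0}^m\mu^j$ is sharp for the self-adjoint model. Everything else — reducing to scalar coefficients and the ``polynomial vanishing on an open set is zero'' step — is routine.
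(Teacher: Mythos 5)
Your proposal is correct and follows essentially the same route as the paper: use the fact that a polynomial function vanishing on a nonempty open subset of $\SSnmu$ vanishes identically, build the hermitian truncated-shift tuple $T_j = S_j + S_j^*$ on the $N(\mu,m)$-dimensional space spanned by words of length at most $m$, and pad with zeros to handle $n > N(\mu,m)$. The only (immaterial) difference is at the final verification: the paper certifies $p(T)\ne 0$ by applying $p(T)$ to the vacuum vector and isolating the top homogeneous component (after reducing to the case that $p$ has degree exactly $m$), rather than establishing the full linear independence of the word matrices $\{w(T): |w|\le m\}$ that you invoke — a stronger claim that is also true (by the triangularity-in-length argument you sketch) but not needed.
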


%

\begin{proof} 
 Arguing by contradiction, suppose $p\ne 0,$
 but there is an $n\ge N(\mu,m)$ 
 and a nonempty open subset $\cU\subseteq \SSnmu$
 on which $p$ vanishes.  In this
 case, there is no loss of generality assuming 
 the degree of $p$ is $m.$ 
   Since $p$ vanishes on an open subset of $\SSnmu,$
 it vanishes on all of $\SSnmu.$ 

 For the moment, assume $n=N.$ Let $H$ denote the Hilbert
 space with orthonormal basis $\cW,$ the words of 
 length at most $m$ in the variables $a.$ Hence
 $\dim H=N.$  Define linear maps $S_j,$ for $j=1,\dots,\vg,$
 on $H$ by $S_j w= a_jw,$ if $w\in \cW$ has length 
 strictly less than $m,$ and $S_jw=0$ if $w$ has length $m.$
 Observe that  $S_j^*w$ has length strictly less than 
 the length of $w\in \cW.$

 Let $T_j=S_j + S_j^*.$ Thus 
 $T=(T_1,\dots,T_{\mu}) \in  \SSNmu.$ %
 A straightforward computation shows, when $m\ge 1$
\[
 p(T)\varnothing = \sum_{j=0}^{m-1}  q_j + p_m,
\]
 where $q_j$ are homogeneous polynomials of degree $j$
 and $p_m$ is the homogeneous of degree $m$ part of $p.$
 On the other hand, when $m=0,$
\[
 p(T)\varnothing = p_\varnothing \varnothing. 
\]
 Since the set $\{q_0, q_1,\dots,q_{m-1},p_m\}\subseteq H$ 
 is linearly independent and, by assumption, $p(T)\varnothing=0,$
 it follows that $p_m=0,$ contradicting the assumption
 that the degree of $p$ is $m.$ 

 To complete the proof, if $n>N,$ then replace 
 the tuple $T$ by $R=T\oplus 0$ and $\varnothing$
 with $\gamma =\varnothing \oplus 0,$ where
 the first $0$ is the zero tuple in $\mathbb{S}_{n-N}(\CC^\mu)$
 and the second $0$ is the zero vector in $\CC^{n-N},$ and
 observe that $0=p(R)\gamma$ implies $p(T)\varnothing =0.$
\end{proof}

\begin{lemma}
\label{cor:gsforus}
Let  $q(a) = \sum_w q_w  w(a)$ be a $d \times d$  matrix 
 (not necessarily hermitian) polynomial in the 
 freely noncommuting  variables $a_1\dots,a_\mu.$ 
 If $q(A) = 0$ for all $n \in \N$ and $A \in \SSnmu,$ then $q = 0.$
\end{lemma}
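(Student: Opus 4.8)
The plan is to reduce the matrix-valued statement to the scalar-valued one, namely Lemma~\ref{lem:vanishingpoly} applied to polynomials with scalar coefficients. First I would fix $q(a)=\sum_w q_w\,w(a)$ with $q_w\in M_d(\C)$ and let $m$ denote the degree of $q$ in the variables $a_1,\dots,a_\mu$. The scalar Lemma~\ref{lem:vanishingpoly} already gives a threshold $N(\mu,m)=\sum_{j=0}^m\mu^j$ beyond which vanishing on a nonempty open set forces the (scalar) polynomial to be zero; here the hypothesis is even stronger, since $q(A)=0$ for \emph{all} $n$ and all $A\in\SSnmu$, in particular for all sufficiently large $n$.

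The key step is the standard ``scalarization'' device already used in the proof of Proposition~\ref{p:a2convex-deg2}: for $\gamma,\eta\in\C^d$, define the scalar-coefficient polynomial $q_{\gamma,\eta}(a)=\sum_w (\eta^* q_w \gamma)\,w(a)$. For any $n$ and any $A\in\SSnmu$, the hypothesis $q(A)=0$ gives, after compressing by $\eta\otimes v$ on the left and $\gamma\otimes v$ on the right for arbitrary $v\in\C^n$, that $q_{\gamma,\eta}(A)=0$ as an operator on $\C^n$ (indeed $\langle q(A)(\gamma\otimes v),\eta\otimes v'\rangle$ unpacks to $\langle q_{\gamma,\eta}(A)v,v'\rangle$). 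Thus $q_{\gamma,\eta}$ vanishes on $\SSnmu$ for every $n$; choosing $n\ge N(\mu,m)$ and invoking Lemma~\ref{lem:vanishingpoly} (with the open set taken to be all of $\SSnmu$) yields $q_{\gamma,\eta}=0$ as a free polynomial, i.e.\ $\eta^* q_w \gamma=0$ for every word $w$.

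Finally, since $\eta^* q_w \gamma = 0$ for all $\gamma,\eta\in\C^d$ and every word $w$, each coefficient matrix $q_w$ is the zero matrix, hence $q=0$. I do not anticipate a serious obstacle: the only point requiring a line of care is that the degree $m$ of the matrix polynomial $q$ coincides with (an upper bound for) the degree of each scalarization $q_{\gamma,\eta}$, so that a single threshold $N(\mu,m)$ works uniformly; this is immediate because $q_{\gamma,\eta}$ only involves words $w$ already appearing in $q$. (One could alternatively bypass Lemma~\ref{lem:vanishingpoly} and argue directly with the shift tuple $T$ constructed there, but routing through the scalar lemma is cleaner.)
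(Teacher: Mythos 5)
Your argument is correct and is essentially the paper's own proof: specializing $\gamma=e_j$, $\eta=e_i$ to standard basis vectors, your scalarization $q_{\gamma,\eta}$ is exactly the $(i,j)$ entry of $q$ viewed as a $d\times d$ matrix of scalar polynomials, which is how the paper reduces to Lemma~\ref{lem:vanishingpoly}. No gaps.
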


\begin{proof}
Since $q(a)$ is a $d \times d$  matrix polynomial,  
 i.e $q_w \in M_d(\C),$ it can be
 viewed as a $d \times d$ matrix $(q^{i,j}(a))_{i,j=1}^d$ of scalar 
  polynomials.
 Suppose that $q$ is nonzero. Choose $i,j$ 
 such that $q^{i,j}(a)$ is nonzero.
 Since $q^{i,j}(A) = 0$ for all $n \in \N$ and 
 $A \in  \mathbb S_n(\C^\mu),$ 
 Lemma \ref{lem:vanishingpoly} implies $q^{i,j}$ 
  is the zero
polynomial, a contradiction.
\end{proof}

\begin{proposition}
\label{rem:betterlinindep}
 For each positive integer $\ka$ and
 each $n\ge N=\sum_{j=0}^\ka \mu^j$  there exist $A \in \SSnmu$
  and $v \in \C^n$ such that 
\[
 \cM_{A,v,\ka}=\{w(A)v\,:\, 
    \text{$w(a)$ is a word  with degree at most $\ka$} \}
\]
is linearly independent.

 In particular, 
 in the case $\kappa=1,$ there is an $A\in \mathbb{S}_{\mu+1}(\C^\mu)$
 and a $v\in \C^{\mu+1}$ such that $\cM_{A,v,1}$ is linearly 
 independent.
\end{proposition}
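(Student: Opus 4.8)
The plan is to exhibit a concrete pair $(A,v)$ realizing linear independence, guided by the construction already used in the proof of Lemma~\ref{lem:vanishingpoly}. First I would let $H$ be the Hilbert space with orthonormal basis indexed by the set $\cW_{\le \ka}$ of words in $a_1,\dots,a_\mu$ of length at most $\ka$, so $\dim H = N = \sum_{j=0}^{\ka}\mu^j$. As in that lemma, define the ``creation'' operators $S_j$ on $H$ by $S_j w = a_j w$ when the word $w$ has length strictly less than $\ka$ and $S_j w = 0$ when $w$ has length exactly $\ka$, and set $T_j = S_j + S_j^*$, $T=(T_1,\dots,T_\mu)\in\SSNmu$. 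Take $v=\varnothing$, the basis vector indexed by the empty word. The claim is that $\cM_{T,v,\ka}$ is linearly independent, which proves the proposition for $n=N$; the case $n>N$ is then handled exactly as in Lemma~\ref{lem:vanishingpoly}, by padding $T$ with zeros to $R=T\oplus 0$ and $v$ with zeros to $\gamma=\varnothing\oplus 0$, since any dependence relation among the $w(R)\gamma$ restricts to one among the $w(T)\varnothing$.

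The key computation is to understand $w(T)\varnothing$ for a word $w$ of length $\ell\le\ka$. Expanding $T_j = S_j+S_j^*$ and using that $S_j^*$ strictly decreases word length while $S_j$ increases it (until it is killed at length $\ka$), one sees that
\[
 w(T)\varnothing = w(S)\varnothing + (\text{lower-length terms}) = \widehat{w} + \sum_{j<\ell} (\text{terms supported on words of length } j),
\]
where $\widehat w$ is the basis vector indexed by the word $w$ itself (note $S_j^*\varnothing = 0$, so the ``top'' term of $w(T)\varnothing$ is exactly $w(S)\varnothing = \widehat w$, provided $\ell\le\ka$ so that no creation operator is annihilated along the way). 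Thus, grouping the vectors $\{w(T)\varnothing : |w|\le \ka\}$ by the length $\ell$ of $w$ and ordering blocks by increasing $\ell$, the ``leading term'' of $w(T)\varnothing$ in the block of length $\ell$ is the distinct basis vector $\widehat w$, while all correction terms live in strictly lower blocks. An upper-triangularity / leading-term argument then shows the whole family is linearly independent: given a vanishing linear combination, look at the highest length $\ell$ appearing with a nonzero coefficient; the coefficient of each basis vector $\widehat w$ with $|w|=\ell$ in the combination is exactly the coefficient of $w(T)\varnothing$, forcing all those to vanish, and inducting downward finishes the argument.

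The main obstacle — really the only subtlety — is bookkeeping the ``lower-length terms'': I must verify carefully that applying a product $T_{i_1}\cdots T_{i_\ell}$ to $\varnothing$ never produces a component on a word of length greater than $\ell$ (which holds because each $S_j$ raises length by one and there are only $\ell$ factors, and $S_j$ acting on a length-$\ka$ word gives $0$ rather than overflowing) and that the unique length-$\ell$ component is precisely $\widehat w$ with coefficient $1$ (which holds because the only way to reach a length-$\ell$ word from $\varnothing$ in $\ell$ steps is to apply $S_{i_1},\dots,S_{i_\ell}$ in the $S$-slots and never an $S^*_j$). Once this triangular structure is in place, linear independence is immediate, and the reduction from general $n\ge N$ to $n=N$ is verbatim the padding argument from Lemma~\ref{lem:vanishingpoly}. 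For the final ``in particular'' clause, one just specializes $\ka=1$: then $N=\mu+1$, $H$ has basis $\{\varnothing, a_1,\dots,a_\mu\}$, and $\cM_{A,v,1}=\{v, A_1 v,\dots,A_\mu v\}$ is linearly independent for the explicit $A=T\in\mathbb{S}_{\mu+1}(\C^\mu)$ and $v=\varnothing\in\C^{\mu+1}$ produced above.
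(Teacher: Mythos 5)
Your proof is correct, but it takes a genuinely different route from the paper. The paper argues by contradiction and compactness: it assumes that for every $n\ge N$, every $\AB\in\SSnmu$ and every $\vv\in\C^n$ some unit coefficient vector $c$ satisfies $q_c(\AB)\vv=0$, shows the compact sets $K_{\AB,\vv}$ have the finite intersection property (via direct sums), extracts a single $\wtc$ annihilated by all evaluations, and invokes Lemma~\ref{lem:vanishingpoly} to get $q_{\wtc}=0$, a contradiction; it then compresses the resulting witness to size $N$ and pads. You instead produce an explicit witness at size exactly $N$: the truncated creation operators $T_j=S_j+S_j^*$ with $v=\varnothing$, together with the grading/leading-term argument that $w(T)\varnothing=\widehat w+(\text{components of length}\le |w|-2)$, since each $S_j$ raises length by one, each $S_j^*$ lowers it, and the all-$S$ term survives truncation because $|w|\le\ka$. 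This is essentially the same computation the paper already performs inside the proof of Lemma~\ref{lem:vanishingpoly}, pushed one step further; your version is constructive, avoids the compactness and compression steps entirely, and in fact yields Lemma~\ref{lem:vanishingpoly} as an immediate corollary (a nonzero polynomial of degree at most $\ka$ cannot kill a vector that is cyclic with independent orbit). What the paper's softer argument buys is that it needs only the vanishing lemma as a black box; what yours buys is an explicit $(A,v)$ and a shorter logical chain. The padding step for $n>N$ is handled identically in both.
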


\begin{proof}
 Fix $\ka.$
 Let $\cW_\ka$ denote the words in the (freely noncommuting)
 variables $a_1,\dots,a_\vmu$ of degree at most $\ka.$ 
 The cardinality of $\cW_\ka$ is $N=\sum_{j=0}^\ka \mu^j.$
 Given $c:\cW_\ka\to \C,$ let $c_w$ denote the value of 
 $c$ at $w\in \cW_\ka.$
  Let
 $\mathcal{C}$ denote the set of all functions $c:\cW_\ka\to \C$ such that
 $\sum_w |c_w|^2 =1.$ Thus $\mathcal{C}$ is identified with the unit
 sphere in $\CC^N$  and is thus compact.

  Given $c\in \mathcal{C},$ let
 \[
 q_c(a) = \sum_{w\in \cW_\ka} c_w w.
\]
  Arguing by contradiction suppose,  for each  $n \ge N$,
 for  each  $\AB\in \SSnmu$ and each $\vv\in \C^n,$ 
 there exists a $c\in \mathcal{C}$ such that 
 $q_c(\AB)\vv=0.$  Given  $n \ge N$ and $\AB\in \SSnmu$ and $\vv\in \C^n,$ let 
\[
 K_{\AB,\vv} =\{c\in \mathcal{C}: q_c(\AB)\vv=0\}.
\]
 Thus $K_{\AB,\vv}$ is nonempty for all $\AB$ and $\vv.$ 
 Likewise, since, for $\AB\in \SSnmu$ and $\vv\in \CC^n,$
  the mapping
\[
 \mathcal{C}\ni c \mapsto q_c(\AB)\vv\in \CC^n
\]
 is continuous, the sets $K_{\AB,\vv}$ are compact. 
 Given a positive integer $M$,  positive integers $n_1,\dots, n_M \ge N$,
 $\AB^j\in \mathbb{S}_{n_j}(\C^\vmu)$ and
 $\vv_j\in \C^{n_j}$ for $1\le j\le M,$ observe that
\[
 \cap_{j=1}^M K_{\AB^j,\vv_j}  = K_{\oplus \AB^j,\oplus \vv_j} \ne \emptyset.
\]
 Hence $\{K_{\AB,\vv}: \AB,\vv\}$ has the finite intersection property. 
 It follows that
\[
 \cap_{\AB,\vv} K_{\AB,\vv} \ne \emptyset.
\]
 Choosing any $\wtc$ in this intersection, 
\[
 q_\wtc(\AB)\vv=0
\]
for all $\AB$ and $\vv.$
 Consequently
  $q_\wtc(\AB)=0$ for all $\AB \in \SSnmu$ and hence, 
 by Lemma~\ref{lem:vanishingpoly}, $q_\wtc=0.$
 Thus, $\wtc_w=0$
 for all $\vv,$ contradicting $\wtc\in\mathcal{C}.$
 Hence, there exist $\AB$ and $\vv$ such that 
 $\cM_{\AB,\vv,\ka}$ is linearly independent.
 Let $\ell$ denote the size of $\AB;$ that is
 $\AB\in \mathbb{S}_\ell(\C^\mu)$ and $\cM_{\AB,\vv,\ka}$
 is a subspace of $\CC^\ell$ of dimension $N.$
  Let $V$ denote the inclusion
 of $\cM_{\AB,\vv,\ka}$ into $\CC^\ell$ and let
 $B=V^*\AB V.$ 
 Since $V^*A^\alpha V \vv= A^\alpha \vv\in \cM_{\AB,\vv,\ka}$
 for words $\alpha$ of length at most $\ka,$
 the set 
\[
 \{w(B)\vv: w \text{ is a word of length at most } \ka\}
\]
 is linearly independent.

 Given $m>N,$
 let $A=B\oplus 0,$ where $0\in M_{m-N}(\C^\mu).$
 Likewise let $v=\vv\oplus 0  \in\C^m = \CC^N \oplus \CC^{m-N}$
 and note that $\cM_{A,v,\ka}$ is linearly independent.
\end{proof}

\subsection{The Border vector,  middle matrix 
 and non-hermitian variables}
\label{s:BVMM}
 In this subsection, $q(a,x)$ denotes a fixed polynomial that
 is homogeneous of degree two in $x$ and $d_a$
 denote its degree in $a.$

 Enumerate  the words in the variables $a_1,\dots,a_\vmu$ 
 of degree at most $d_a$
 as $\{m_1,\dots,m_N\}.$  In particular, 
 $N=\sum_{j=0}^{d_a} \vmu^j.$ For $1\le j,k\le \vmu$ and 
 $1\le \rR,\tT \le N$  there exist uniquely determined
 $d\times d$ matrix-valued polynomials 
 $\mfZ^{j,k}_{\rR,\tT}(a)$ 
  such that 
\begin{equation}
 \label{e:qax}
q(a,x) = \sum_{j,k,\rR,\tT}  (I_d\otimes m_{\rR}(a)^*x_j) \mfZ^{j,k}_{\rR,\tT}(a)
     (I_d\otimes x_km_\tT(a)).
\end{equation}
 In fact,

\begin{equation}
 \label{e:qax-}
(I_d\otimes m_{\rR}(a)^*x_j) \mfZ^{j,k}_{\rR,\tT}(a) (I_d\otimes x_km_\tT(a)) = \sum_{s=1}^N \{q_w w : 
     w= m_{\rR}(a)^* x_j m_s(a) x_k m_{\tT}(a)\}. 
\end{equation}

 Letting $\cZ$ denote the block  matrix indexed by $((j,\rR),(k,\tT))$ with
 $d\times d$ polynomial entries $\mfZ^{j,k}_{\rR,\tT}(a)$ and 
 letting $V(a)[x]$ the column vector with $(k,\tT)$ entry 
 $I_d\otimes x_k m_\tT(a),$  equation~\eqref{e:qax} becomes,

\begin{equation}
 \label{e:qax+}
 q(a,x)  =  V(a)[x]^*  \, \cZ(a) \,  V(a)[x].
\end{equation}

The polynomial $V(a)[x]$ is the {\bf border vector}  and $\cZ(a)$ 
 is  the {\bf middle matrix} for $q.$
 Equation~\eqref{e:qax+} is the \df{border vector-middle matrix}
 representation of $q.$

  Before continuing, we pause to introduce non-hermitian
 freely noncommuting variables.  Accordingly,
 let $\chi_1,\dots,\chi_k,z_1,\dots,z_\ell,w_1,\dots,w_\ell$
 be freely noncommuting variables. Now let ${}^*$ denote
 an involution on words in these variables that reverses
 the order of products and satisfies $\chi_j^*=\chi_j$
 and $z_j^*=w_j.$ Thus the $\chi$ variables are
 hermitian, but the $z,w$ variables are not. It is natural,
 and customary, to systematically use $z_j^*$ in place
 of $w_j.$  A polynomial in this mix of  variables is now
 a linear combination of words with matrix coefficients. 
 A word in these variables evaluates at a 
 tuple $(X,Z) \in \SSnk\times M_n(\C^\ell)$  in the 
 natural way:  replace $\chi_j$ with $X_j$ 
 and similarly  replace $z_j$ and $z_j^*$ with 
 $Z_j$ and $Z_j^*.$  The involution 
 extends in the evident fashion to this mixed variable
 setting. Namely, the coefficient matrices are replaced
 by their adjoints and the involution is applied to the words. 
 Finally, a polynomial is hermitian if $p^*=p$; equivalently
 $p(X,Z)^*=p^*(X,Z)$ for all tuples $(X,Z).$
 
 The definition of the border vector, as a polynomial,
 naturally extends to the case of non-hemitian $x$ variables. 
 With this understanding, and    given positive integers $m,n,$ a tuple
 $B\in \SSnmu,$ a tuple $\beta\in M_{n,m}(\C^\mu)$ and
 tuple $\alpha\in \mathbb{S}_m(\C^\mu),$ 
\begin{equation}
 \label{e:qBba}
\begin{split}
  \sum_{j,k,\rR,\tT} &  (I_d\otimes m_{\rR}(B)^*\beta_j) \mfZ^{j,k}_{\rR,\tT}(\alpha)
     (I_d\otimes \beta_k^* m_\tT(B))\\
&= V(B)[\beta^*]^* \cZ(\alpha) V(B)[\beta^*].
\end{split}
\end{equation}

\begin{proposition}
 \label{p:BVMM}
If  $W,R,S$ are given as in  Proposition~\ref{thm:partialconvexity} item~\ref{i:a2convex3}, then
\begin{equation*}
  (I_d\otimes W)^* \, q(R,S)\,   (I_d\otimes W)
 = q(A,X) + V(A)[\beta^*]^* \, \cZ(\alpha)\,  V(A)[\beta^*].
\end{equation*}
\end{proposition}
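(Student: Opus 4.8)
The plan is to substitute the tuples $R,S$ and the isometry $W$ of Proposition~\ref{thm:partialconvexity} item~\ref{i:a2convex3} directly into the border vector-middle matrix representation \eqref{e:qax+} of $q.$ Since \eqref{e:qax+} is a polynomial identity and both $R$ and $S$ are hermitian, it passes to evaluation as $q(R,S)=V(R)[S]^*\,\cZ(R)\,V(R)[S].$ Compression by $I_d\otimes W$ is precisely compression onto the first summand of the decomposition $\C^{n+m}=\C^n\oplus\C^m,$ so it suffices to read off the $(1,1)$ block of this product.

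The first step is to record how the ingredients behave at $(R,S).$ Because each $R_i$ is block diagonal with diagonal blocks $A_i$ and $\alpha_i,$ and word evaluation respects direct sums, every $m_\rR(R)$ is block diagonal with diagonal blocks $m_\rR(A)$ and $m_\rR(\alpha).$ Applying this coefficient by coefficient, and reorganizing $\C^d\otimes\C^{n+m}$ as $(\C^d\otimes\C^n)\oplus(\C^d\otimes\C^m),$ each middle-matrix entry $\mfZ^{j,k}_{\rR,\tT}(R)$ is block diagonal with diagonal blocks $\mfZ^{j,k}_{\rR,\tT}(A)$ and $\mfZ^{j,k}_{\rR,\tT}(\alpha).$ By contrast the $(j,\rR)$ entry $I_d\otimes m_\rR(R)^*S_j$ of $V(R)[S]^*$ (here using $S_j^*=S_j$) is the full $2\times2$ block matrix whose four blocks are $I_d$ tensored with $m_\rR(A)^*X_j,$ $m_\rR(A)^*\beta_j,$ $m_\rR(\alpha)^*\beta_j^*$ and $m_\rR(\alpha)^*\delta_j$; likewise the $(k,\tT)$ entry $I_d\otimes S_km_\tT(R)$ of $V(R)[S]$ has blocks $I_d$ tensored with $X_km_\tT(A),$ $\beta_km_\tT(\alpha),$ $\beta_k^*m_\tT(A)$ and $\delta_km_\tT(\alpha).$

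The second step is the block computation. Writing the $((j,\rR),(k,\tT))$ summand $(I_d\otimes m_\rR(R)^*S_j)\,\mfZ^{j,k}_{\rR,\tT}(R)\,(I_d\otimes S_km_\tT(R))$ of $V(R)[S]^*\,\cZ(R)\,V(R)[S]$ in $2\times2$ block form, block diagonality of the middle factor forces its $(1,1)$ block to be the sum of just two terms,
\begin{gather*}
(I_d\otimes m_\rR(A)^*X_j)\,\mfZ^{j,k}_{\rR,\tT}(A)\,(I_d\otimes X_km_\tT(A))\\
{}+(I_d\otimes m_\rR(A)^*\beta_j)\,\mfZ^{j,k}_{\rR,\tT}(\alpha)\,(I_d\otimes \beta_k^*m_\tT(A)).
\end{gather*}
The first of these comes from the $(1,1)$ blocks of all three factors; the second comes from the $(1,2)$ block of the first factor, the $(2,2)$ block of the middle factor, and the $(2,1)$ block of the third. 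Summing over $j,k,\rR,\tT,$ the first family of summands reassembles to $q(A,X)$ by \eqref{e:qax}, and the second reassembles to $V(A)[\beta^*]^*\,\cZ(\alpha)\,V(A)[\beta^*]$ by the identity \eqref{e:qBba} with $B=A$; their sum is the asserted formula.

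I expect the only real work to be the block bookkeeping in the second step: making sure the compression $(I_d\otimes W)^*(\,\cdot\,)(I_d\otimes W)$ acts on the $\C^n\oplus\C^m$ tensor factor and not on the $\C^d$ one, that hermiticity of $S$ is what produces the stated entries of $V(R)[S]^*,$ and that the surviving off-diagonal contribution matches \eqref{e:qBba} on the nose, with $\beta_j$ appearing on the left and $\beta_k^*$ on the right rather than the reverse. No faithfulness or polynomial-identity result is invoked; the conclusion is a purely algebraic identity valid for all $A,X,\alpha,\delta,\beta$ of the relevant sizes.
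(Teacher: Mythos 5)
Your proof is correct and follows essentially the same route as the paper's: both are direct block-matrix computations showing that the $(1,1)$ block of the compressed quantity receives exactly two contributions, the all-diagonal path giving $q(A,X)$ and the $(1,2)$--$(2,2)$--$(2,1)$ path giving $V(A)[\beta^*]^*\,\cZ(\alpha)\,V(A)[\beta^*]$ via \eqref{e:qBba}. The only difference is organizational --- the paper computes $W^* w(R,S) W$ word by word and then regroups into the middle-matrix blocks, whereas you substitute into the representation \eqref{e:qax+} first and exploit block-diagonality of $\cZ(R)$ --- and your bookkeeping (hermiticity of $S$, the reorganization of $\C^d\otimes(\C^n\oplus\C^m)$) is accurate.
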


\begin{proof}
 Suppose  $w = \ell(a)x_jc(a)x_kr(a),$ where 
 $\ell(a), c(a), r(a)$ are words. 
Compute
\begin{equation*}
\begin{split}
w(R,S) =  & \begin{pmatrix}  \ell(A) & 0 \\ 0 &  \ell(\alpha) 
\end{pmatrix} 
  \begin{pmatrix}  X_j & \beta_j \\ \beta_j^* &  \delta_j \end{pmatrix}
 \begin{pmatrix}  c(A) & 0 \\ 0 & c(\alpha)
\end{pmatrix} 
  \begin{pmatrix}  X_k & \beta_k \\ \beta_k^* &  \delta_k \end{pmatrix}
 \begin{pmatrix}  r(A)  & 0 \\ 0 & r(\alpha)\end{pmatrix}  \\
& = \begin{pmatrix} \ell(A)X_jc(A)X_kr(A) + \ell(A)\beta_jc(\alpha)\beta_k^*r(A) &  \quad * \\ \quad * & \quad *   \end{pmatrix}.
\end{split}
\end{equation*}
Hence, %
\begin{equation}
\label{eq:imp1}
 W^*\,  w(R,S)\,  W  = 
\ell(A)X_jc(A)X_kr(A)  + \ell(A)\beta_j c(\alpha) \beta_k^*r(A).
\end{equation}
 In particular, fixing $\rR,\tT,j,k$ and letting $Y=I_d\otimes W,$
 equations~\eqref{e:qax-} and \eqref{eq:imp1} give
\begin{equation}
 \label{e:ohmy}
\begin{split}
Y^*  (I_d\otimes & S_j m_{\rR}(R))^*  \,  \mfZ^{j,k}_{\rR,\tT}(R)\, 
  (I_d\otimes S_k m_{\tT}) (R))Y \\
=  Y^* &  \left (\sum_{s=1}^N  \{ q_w \otimes w(R,S):
   w=m_{\rR}(a)x_j m_s(a) x_k m_{\tT}(a)\} \right ) Y \\
 =  &  \sum_{s=1}^N  \{ q_w \otimes [w(A,X)
     + m_{\rR}(A)^*\beta_j m_s(\alpha) \beta_k^* m_{\tT}(A)]: 
   w=m_{\rR}^*x_j m_s x_k m_{\tT}\}  \\
= & (I_d\otimes X_j m_{\rR}(A))^* \, \mfZ^{j,k}_{\rR,\tT}(A)  \, 
  (I_d\otimes X_k m_{\tT}(A)) \\
   & + (I_d\otimes \beta_j^* m_{\rR}(A))^* 
    \, \mfZ^{j,k}_{\rR,\tT}(\alpha)\, 
     (I_d\otimes \beta_k^* m_{\tT}(A)).
\end{split}
\end{equation}
 Summing  equation~\eqref{e:ohmy} 
 over $\rR,\tT,j,k$ and using equations~\eqref{e:qax},
 \eqref{e:qax+}  and \eqref{e:qBba},
\begin{equation*}
 (I_d\otimes W)^* q(R,S)  (I_d\otimes W)
  = q(A,X) + V(A)[\beta^*]^* \cZ(\alpha) V(A)[\beta^*]. 
\qedhere
\end{equation*}
\end{proof}

\subsection{Proof of Theorem \ref{thm:partialconvexpolys}}
\label{s:a2endproof}
\begin{proof}[Proof of Theorem \ref{thm:partialconvexpolys}]
Since $p$ is convex in $x,$ Proposition~\ref{p:a2convex-deg2}
 says  its degree in $x$ is at
 most two. Thus,
\begin{equation*}
  p(a,x) = L(a,x) + q(a,x),
\end{equation*}
 where $L(a,x)$ is affine linear in $x$ and 
\begin{equation*}
 q(a,x) =\sum_{w\in \Gamma} p_w w,
\end{equation*}
 where $\Gamma$ denotes words in the variables $a,x$ that are homogeneous
 of degree two in $x.$
  Since $p$ is convex in $x,$ so is $q,$ 
  and it suffices
 to prove that there exists an $xy$-pencil $\Lambda$
 such that $q=\Lambda^* \Lambda.$

  Let $\kappa$ denote the degree of $q$ in $a.$ 
   By Proposition~\ref{rem:betterlinindep}, 
  there is an $\ell$ such that for all $n\ge \ell$
  there exists an $A\in \SSnmu$ and a $v\in \C^n$ such that
\[
  \cM_{A,v, \kappa} 
    =\{w(A)v: w \mbox{ is a word of length at most } \kappa\}
\]
 is linearly independent. 
 
 Fix $n\ge \ell$ and choose $C \in \SSnmu, v \in \C^n$ such that $\cM_{C,v}$
 is linearly independent. For this $C$ and a given $H\in M_n(\C^\mu),$ 
 the border vector evaluated at $(C,H^*)$ is
\[
V(C)[H^*] = \displaystyle \bigoplus_{j=1}^\mu \begin{pmatrix}  H^*_j m_1 (C) \\
 \vdots \\  H_j^* m_N(C) \end{pmatrix}. 
\]
 By linear independence of $\cM_{C,v},$ 
\begin{equation}
\label{e:spanisall}
 \{V(C)[H^*]v:H\in M_n(\C^\mu)\} = \C^{\mu n N}.
\end{equation}

 Let $\alpha \in \SSnmu$ 
 be given.  To prove that 
 $\cZ(\alpha)\in M_d(\C)\otimes M_{\mu n N}(\C)$ is positive
 semidefinite,  let $z\in \C^d\otimes \C^{\mu n N}$ be given.
 There exist  $\gamma_1,\dots,\gamma_d\in \C^d$
 and $u_1,\dots,u_d\in \C^{\mu n N}$ such that 
 $z=\sum \gamma_\jj \otimes u_\jj.$ 
 By equation \eqref{e:spanisall}, 
  for each $1\le \jj\le d,$ there exist $H^\jj  \in M_n(\C^\mu)$
 such that $u_{\jj} = V(C)[(H^\jj)^*]v.$
 Let $\beta_j$ denote the $d\times 1$ block matrix with
 $(a,1)$ entry $H_j^\jj.$  Thus $\beta_j\in M_{dn,n}(\C)$
 and $\beta \in M_{dn,n}(\C^\mu).$
 Let   $v_\jj=e_\jj\otimes v  \in \C^d\otimes \C^{n},$
 where $\{e_1,\dots,e_d\}$ is the standard orthonormal 
 basis for $\C^d.$ 

 Set $A=I_d\otimes C\in \mathbb{S}_{dn}(\C^\mu).$
 Thus, $A$ is the direct sum of $C$ with itself  $d$-times. 
 Let $\Gamma=\sum_{\kk=1}^d  \gamma_\kk\otimes v_\kk$
   and compute
\begin{equation}
 \label{e:AtoC}
\begin{split}
  (I_d\otimes V(A)[\beta^*]) & \Gamma 
 =  \sum_{\kk=1}^d \gamma_\kk \otimes V(A)[\beta^*](e_\kk\otimes v)\\
 = &   \sum_{\kk=1}^d \gamma_\kk \otimes V(C)[(H^\kk)^*]v 
 =  \sum_\kk^d  \gamma_\kk \otimes u_\kk  = z.
\end{split}
\end{equation}

 Let $\delta\in \SSnmu$ be given 
 and let $W,R,S$ have the form given in 
 Proposition~\ref{thm:partialconvexity} item~\ref{i:a2convex3}.
  Since $q$ is convex in $x,$ item~\ref{i:a2convex3}
  of Proposition~\ref{thm:partialconvexity} implies
\begin{equation}
\label{eq:imp2-alt}
(I_d \otimes W)^*\,  [q(R,S)]\,  (I_d \otimes W)  \succeq q(A,X). 
\end{equation}
 Proposition~\ref{p:BVMM} and equation~\eqref{eq:imp2-alt} give
\begin{equation}
\label{eq:imp3}
(I_d \otimes V(A)[\beta^*])^* \, \mathcal Z(\alpha) \,
  (I_d \otimes V(A)[\beta^*]) \succeq 0.
\end{equation}
 Combining equations~\eqref{eq:imp3} and \eqref{e:AtoC} gives,
\[
 0 \le \langle \cZ(\alpha) (I_d\otimes V(A)[\beta^*]) \Gamma,
   (I_d\otimes V(A)[\beta^*]) \Gamma\rangle
 = \langle \cZ(\alpha) z,z\rangle
\]
 and thus $\cZ(\alpha)\succeq 0.$  

 At this point, it has been shown that there is an $\ell$
 such that if $n\ge \ell$ and $\alpha\in \SSnmu,$ then
 $\cZ(\alpha)\succeq 0.$ Hence, by a standard direct
 sum argument,  $\cZ(\alpha)\succeq 0$
  for all $n$ and   $\alpha\in \SSnmu;$ that is
 $\cZ$ is a positive polynomial. 
 Hence $\cZ$  
 factors \cite{M} in the sense 
 that  there exists a (not necessarily square) matrix polynomial 
 $F$ such that $\cZ(a)=F(a)^*F(a).$  Consequently, 
\[
q(a,x) = V(a)[x]^* \cZ(a) V(a)[x] = \Lambda(a,x)^* \Lambda(a,x),
\]
 where $\Lambda(a,x) = F(a)V(a)[x]$ is linear in $x$
  and the proof is complete. 
\end{proof}

\subsection{Biconvexity}
\label{s:a2cors}
This section concludes by collecting consequences
  of Theorem~\ref{thm:partialconvexpolys} for later use.
Let \df{$\mathcal{L}$} denote the set of words in $a,x$ of 
 degree at most two in both $a$ and $x,$ 
 but  excluding those of the forms $a_ja_ix_kx_m$
 and $x_mx_ka_ia_j.$

\begin{corollary}
 \label{c:preinfoabtp}
    Suppose $p(a,x)$ is a hermitian $d \times d$ matrix 
  polynomial. If $p$ is convex in 
 $x$ and has degree at most two in $a,$ 
  then $p$ contains no words of the form $x_jx_\ell a_ka_m$ or
 $a_ma_kx_\ell x_j;$ that is  $p(a,x) \in M_d \otimes \spann \cL.$
\end{corollary}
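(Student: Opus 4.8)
The plan is to invoke Theorem~\ref{thm:partialconvexpolys} directly. Since $p(a,x)$ is hermitian and convex in $x$, Theorem~\ref{thm:partialconvexpolys} gives a decomposition $p(a,x) = L(a,x) + \Lambda(a,x)^*\Lambda(a,x)$, where $L$ is a hermitian $d\times d$ matrix-valued pencil that is affine linear in $x$ and $\Lambda$ is an $N\times d$ matrix-valued polynomial that is linear in $x$; write $\Lambda(a,x) = \Lambda_0(a) + \sum_k \Lambda_k(a)x_k$ with each $\Lambda_j(a)$ a matrix polynomial in $a$ alone. Then $\Lambda^*\Lambda$ has its degree-two-in-$x$ part equal to $\sum_{k,m}\Lambda_k(a)^* x_k x_m \Lambda_m(a)$, and the monomials occurring in each summand all have the shape $(\text{word in }a)\,x_k x_m\,(\text{word in }a)$ — in particular, no $x$ appears to the outside of an $a$. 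So the only words of the excluded forms $x_jx_\ell a_ka_m$ or $a_ma_kx_\ell x_j$ could come from $L$, and $L$ is linear in $x$, hence contributes no degree-two-in-$x$ words at all. Thus $p$ contains no word of the form $x_jx_\ell a_ka_m$ or $a_ma_kx_\ell x_j$.

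The second hypothesis — that $p$ has degree at most two in $a$ — then forces $p$ into $M_d\otimes\spann\cL$. Indeed, by Theorem~\ref{thm:partialconvexpolys} (more precisely, by Proposition~\ref{p:a2convex-deg2}) $p$ already has degree at most two in $x$; combined with the standing assumption, every word appearing in $p$ has degree at most two in both $a$ and $x$. Recalling that $\cL$ is precisely the set of words in $a,x$ of degree at most two in each of $a$ and $x$ \emph{with the forms $a_ja_ix_kx_m$ and $x_mx_ka_ia_j$ removed}, the first paragraph shows $p$ omits exactly those forbidden forms, so every word of $p$ lies in $\cL$ and hence $p\in M_d\otimes\spann\cL$.

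The one point that needs a little care — and is the only place an actual argument is required rather than a citation — is the claim that $\Lambda^*\Lambda$ produces no word of the excluded shapes. Here I would argue at the level of monomials: if $\Lambda(a,x)=\sum_w \Lambda_w w$ where each $w$ is a word in $a$ of degree at most $\kappa$ possibly followed on the right by a single $x_k$ (that is, $w = m_r(a)$ or $w = m_r(a)x_k$ reflecting linearity in $x$), then a word appearing in $w_1^* w_2$ with both $w_1,w_2$ carrying an $x$ has the form $x_{k_1}m_{r_1}(a)^* m_{r_2}(a) x_{k_2}$ — an $a$-word flanked by single $x$'s, never two adjacent $x$'s flanked by two adjacent $a$'s. (The cross terms $\Lambda_0(a)^* x_k \Lambda_m(a)$ and its adjoint, and the $\Lambda_0$-only term, are lower degree in $x$ and irrelevant.) So no word of the form $x_jx_\ell a_ka_m$ or $a_ma_kx_\ell a_j$ — correcting, $a_ma_kx_\ell x_j$ — can arise from $\Lambda^*\Lambda$, and since $L$ is linear in $x$ it contributes none either. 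I expect no real obstacle: the corollary is essentially a structural readout of the factorization in Theorem~\ref{thm:partialconvexpolys}, and the main (mild) task is bookkeeping the monomial shapes in the square $\Lambda^*\Lambda$.
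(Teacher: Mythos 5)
There is a genuine gap, and it sits exactly where you flagged that ``a little care'' is needed. You assume $\Lambda$ can be written as $\Lambda_0(a)+\sum_k\Lambda_k(a)x_k$, i.e., that every word of $\Lambda$ is an $a$-word possibly followed on the right by a single $x_k$. Theorem~\ref{thm:partialconvexpolys} does not give you this: ``linear in $x$'' only means each word carries one $x$, which may sit anywhere in the word; indeed the construction in the proof of that theorem yields $\Lambda(a,x)=F(a)V(a)[x]$, whose words have the shape $f(a)\,x_k\,m(a)$ with an $a$-word to the \emph{right} of the $x$. A word such as $xa$ cannot be rewritten in your normal form, so the reduction is not a without-loss-of-generality step. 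Once words of the form $x_k m(a)$ are allowed, your monomial bookkeeping breaks: taking $w_1=x_j$ and $w_2=x_\ell a_k a_m$ gives $w_1^*w_2=x_jx_\ell a_ka_m$, precisely an excluded word. Concretely, with $\mu=d=1$ and $\Lambda=x+xa^2$, the polynomial $\Lambda^*\Lambda=x^2+x^2a^2+a^2x^2+a^2x^2a^2$ is convex in $x$ (a hermitian square of something linear in $x$) yet contains $x^2a^2$. This also explains the red flag that your first paragraph never uses the hypothesis that $p$ has degree at most two in $a$: without it, the claim that $\Lambda^*\Lambda$ omits the excluded words is simply false.

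The missing step is to show that $\Lambda$ has degree at most one in $a$, and this is where the degree hypothesis must enter. The paper does it by writing $\Lambda=\sum_{u=0}^N\Lambda_u$ with $\Lambda_u$ homogeneous of degree $u$ in $a$, using Lemma~\ref{cor:gsforus} to find $(A,X)$ with $\Lambda_N(A,X)\ne 0$, and observing that $t\mapsto p(tA,X)=L(tA,X)+F(t)^*F(t)$, with $F(t)=\Lambda(tA,X)$, would have degree $2N\ge 4$ in $t$ if $N\ge 2$, contradicting $\deg_a p\le 2$. Once $\Lambda$ has degree at most one in each of $a$ and $x$, every word of $\Lambda^*\Lambda$ is a product $w_1^*w_2$ with each factor of length at most two and containing at most one $a$ and one $x$, and then the excluded forms $x_jx_\ell a_ka_m$ and $a_ma_kx_\ell x_j$ genuinely cannot occur. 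With that repair inserted, the rest of your outline matches the paper's proof.
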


\begin{proof}
 From Theorem~\ref{thm:partialconvexpolys}, 
\[
 p(a,x) = L(a,x) + \Lambda(a,x)^*\Lambda(a,x),
\]
 for matrix-valued polynomials $L$ and $\Lambda,$ where 
 $L$ is affine linear in $x$ and $\Lambda$ is linear in $x.$
  Since $p$ has degree at most
 two in $a,$ it is immediate that $L(a,x)$ has degree at most two in $a$
 and thus is a (matrix-valued) linear combination of elements of $\mathcal{L}.$
 Let $N$ denote the degree of $\Lambda$ in $a$ and, 
 arguing by contradiction, suppose $N\ge 2.$  Write
\[
 \Lambda(a,x) =\sum_{u=0}^N \Lambda_u(a,x),
\]
 where $\Lambda_u(a,x)$ is homogeneous of degree $u$ in $a.$ By assumption
 $\Lambda_N(a,x)\ne 0.$ Hence, by Lemma~\ref{cor:gsforus}, 
 there exist $A,X$ such that $\Lambda_N(A,X)\ne 0.$ It follows that
 the matrix-valued polynomial of the single real variable $t,$
\[
 F(t) =\Lambda(tA,X)  =\sum_{u=0}^N  t^u\Lambda_u(A,X)
\]
 has degree $N.$ Hence
\[
p(tA,X) = L(tA,X) + \Lambda(tA,X)^*\Lambda(tA,X) = L(tA,X)  + F(t)^*F(t)
\]
 has degree $2N\ge 4$ in $t,$ contradicting the assumption that $p$
 has degree at most two in $A.$   We conclude that $\Lambda(a,x)$
 has degree at most one in both $a$ and $x$ and the proof is complete.
\end{proof}

\begin{corollary}
\label{prop:infoabtp}
 Suppose $p(a,x)$ is a hermitian $d \times d$ matrix  polynomial. If $p$ is convex in 
 both $a$ and $x$  (separately), then $p$ has degree at most two 
 in both $a$ and $x$ and  contains no words of the form $x_jx_\ell a_ka_m$ or
$a_ma_kx_\ell x_j;$ that is  $p(a,x) \in M_d \otimes \spann \mathcal L.$
\end{corollary}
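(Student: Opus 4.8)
The plan is to deduce Corollary~\ref{prop:infoabtp} by combining Proposition~\ref{p:a2convex-deg2} with Corollary~\ref{c:preinfoabtp}, exploiting the symmetry of the hypothesis under interchanging the roles of $a$ and $x$. First I would observe that since $p$ is convex in $a$, Proposition~\ref{p:a2convex-deg2} (with the roles of the two variable blocks swapped) gives that $p$ has degree at most two in $a$; symmetrically, convexity in $x$ gives degree at most two in $x$. This already records the degree portion of the conclusion.

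Next I would apply Corollary~\ref{c:preinfoabtp} directly: $p$ is convex in $x$ and, by the previous paragraph, has degree at most two in $a$, so the hypotheses of Corollary~\ref{c:preinfoabtp} are met. Its conclusion is precisely that $p$ contains no word of the form $x_jx_\ell a_ka_m$ or $a_ma_kx_\ell x_j$, i.e. $p(a,x)\in M_d\otimes\spann\cL$. That is exactly the remaining assertion, so the proof is essentially a two-line invocation of the earlier results.

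The only point requiring a word of justification is that Proposition~\ref{p:a2convex-deg2} and Corollary~\ref{c:preinfoabtp} are stated with a distinguished ``$x$'' block, whereas here I am also using the ``$a$''-analog of Proposition~\ref{p:a2convex-deg2}. Since the variable names $a$ and $x$ play no structural role in those statements---they are merely a partition of the freely noncommuting variables into two classes---the $a$-version follows by relabeling. I expect this relabeling remark to be the only ``obstacle,'' and it is a purely notational one; no new computation is needed. I would therefore write the proof as: convexity in $a$ forces degree $\le 2$ in $a$ by (the $a$-analog of) Proposition~\ref{p:a2convex-deg2}; now $p$ is convex in $x$ with degree $\le 2$ in $a$, so Corollary~\ref{c:preinfoabtp} applies and yields $p(a,x)\in M_d\otimes\spann\cL$, which in particular has degree $\le 2$ in $x$ as well.
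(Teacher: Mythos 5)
Your proposal is correct and matches the paper's argument: the paper likewise obtains the degree bounds by applying its partial-convexity results with the roles of $a$ and $x$ interchanged (citing Theorem~\ref{thm:partialconvexpolys}, of which Proposition~\ref{p:a2convex-deg2} is the relevant degree statement) and then concludes via Corollary~\ref{c:preinfoabtp}. The relabeling point you flag is indeed purely notational and is handled the same way in the paper.
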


\begin{proof}
If the hermitian polynomial $p(a,x)$ is convex in both  $a$ and $x,$ then
 Theorem~\ref{thm:partialconvexpolys} holds with the roles of $a$ and $x$
interchanged.  In particular, if $p$ is convex in both $a$ and $x,$
 then $p$ has degree at most two in both $a$ and $x$ and 
this result thus follows from Corollary~\ref{c:preinfoabtp}.
\end{proof}

\section{xy-convex  hermitian polynomials}
Proposition~\ref{p:more-is-true} and  Theorem~\ref{t:introxyconvexp}
  are  proved in this section. 
 The proof strategy
 is to show $xy$-convexity 
 here is to proceed directly from Corollary~\ref{prop:infoabtp}. 
 For notational consistency with \cite{JKMMP21} we use
 $x=(x_1,\dots,x_\mu)$ and $y=(y_1,\dots,y_\mu)$ instead
 of $a,x$ for the two classes of variables.


\begin{proposition}[[Proposition 4.1, JKMMP21]
 \label{p:xy-convexp-alt}
  A triple $((X,Y),V)$ is an $xy$-pair if and only if, up to unitary equivalence,
  it has the block form
\begin{equation}
\label{e:xypairform}
 X_j=\begin{pmatrix} X_{0j} & A_j&0\\ A_j^* & * & * \\ 0 & * & * \end{pmatrix}, \ \
 Y_k= \begin{pmatrix} Y_{0k} & 0&C_k\\ 0& *&*\\ C_k^* & * & * \end{pmatrix}, \ \ 
 V= \begin{pmatrix} I& 0&0\end{pmatrix}^*,
\end{equation}
 $1 \le j,k \le \mu.$
 Thus, a polynomial $p(x,y)\in M_d(\C \langle x,y \rangle)$ is $xy$-convex 
  if and only if
\[
 (I_d \otimes V)^*p(X,Y)( I_d \otimes V) - p(X_0,Y_0) \succeq 0
\]
 for each $xy$-pair $((X,Y),V)$ of the form of 
 equation \eqref{e:xypairform}. 
\end{proposition}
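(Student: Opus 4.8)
The plan is to work with the subspace $M=\operatorname{ran}V$, identified with $\CC^m$ via the isometry $V$, and to rewrite the defining relation of an $xy$-pair in terms of the orthogonal projection $P$ of $\CC^n$ onto $M$. First I would observe that $V^*X_iY_jV=V^*X_iV\,V^*Y_jV$ for all $i,j$ is equivalent to $PX_i(I-P)Y_jP=0$ on $M$ for all $i,j$, and — using that the $X_i$ and $Y_j$ are hermitian — that this is in turn equivalent to the geometric statement
\[
 (I-P)X_iP\,\CC^n\ \perp\ (I-P)Y_jP\,\CC^n \qquad (1\le i,j\le\mu).
\]
Writing $\mathcal A=\spann\{(I-P)X_iv:v\in M,\ 1\le i\le\mu\}\subseteq M^\perp$ and $\mathcal C=\spann\{(I-P)Y_jv:v\in M,\ 1\le j\le\mu\}\subseteq M^\perp$ (both finite dimensional, hence closed), the relation is thus equivalent to $\mathcal A\perp\mathcal C$.

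Granting this, the block form falls out of the orthogonal decomposition $\CC^n=M\oplus\mathcal A\oplus\big(M^\perp\ominus\mathcal A\big)$: in an orthonormal basis adapted to it, $V$ becomes the inclusion of the first summand, so $V=\begin{pmatrix}I&0&0\end{pmatrix}^*$. By the definition of $\mathcal A$, the compression of $X_j$ from $M$ into $M^\perp\ominus\mathcal A$ vanishes, which is precisely the $0$ in the top-right corner of $X_j$ in \eqref{e:xypairform}; put $X_{0j}=V^*X_jV$ and let $A_j$ be the $M$-to-$\mathcal A$ block of $X_j$. Since $\mathcal A\perp\mathcal C$, the compression of $Y_k$ from $M$ into $\mathcal A$ vanishes, giving the $0$ in the $(1,2)$ (hence $(2,1)$) slot of $Y_k$; put $Y_{0k}=V^*Y_kV$ and let $C_k$ be the $M$-to-$(M^\perp\ominus\mathcal A)$ block of $Y_k$. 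The remaining blocks are unconstrained hermitian entries, written $*$. For the converse, a one-line block multiplication shows that for any triple of the form \eqref{e:xypairform} the $(1,1)$ entry of $X_iY_j$ equals $X_{0i}Y_{0j}=(V^*X_iV)(V^*Y_jV)$, so $((X,Y),V)$ is an $xy$-pair; this establishes the claimed equivalence.

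For the second assertion, recall that $p$ is $xy$-convex exactly when $p(V^*(X,Y)V)\preceq (I_d\otimes V)^*p(X,Y)(I_d\otimes V)$ for every $xy$-pair $((X,Y),V)$. Simultaneous unitary conjugation $(X,Y,V)\mapsto(U^*XU,U^*YU,U^*V)$ carries $xy$-pairs to $xy$-pairs and transforms this inequality into the corresponding one for the conjugated triple (conjugate both sides by $I_d\otimes U$ on the appropriate tensor factor), so it suffices to test $xy$-convexity on pairs already in the normal form \eqref{e:xypairform}; for such a pair $V^*(X,Y)V=(X_0,Y_0)$, which is the displayed condition. I expect the only genuine obstacle to be the content of the first paragraph: converting the algebraic identity $V^*X_iV\,V^*Y_jV=V^*X_iY_jV$ into the clean orthogonality $\mathcal A\perp\mathcal C$ and verifying that the three-way decomposition of $\CC^n$ above is exactly the one forced by it; everything after that is routine block bookkeeping.
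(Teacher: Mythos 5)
Your argument is correct: the reduction of the $xy$-pair condition to $PX_i(I-P)Y_jP=0$, hence to the orthogonality $\mathcal A\perp\mathcal C$ of the two ranges in $M^\perp$, and the resulting decomposition $\CC^n=M\oplus\mathcal A\oplus(M^\perp\ominus\mathcal A)$ yield exactly the block form \eqref{e:xypairform}, and your unitary-invariance observation correctly handles the second assertion. The paper itself does not prove this proposition but cites \cite{JKMMP21}, and your proof is essentially the same decomposition argument used there, so nothing further is needed.
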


Recall the definition of $\cL$ from Subsection~\ref{s:a2cors}.

%

\begin{proof}[Proof of Proposition~\ref{p:more-is-true}] 
To show that $p$ is convex in $x$ and $y$ separately,  simply  replace
$X_1,X_2,Y$ in the proof \cite[Lemma~4.3]{JKMMP21} with 
  $X^1,X^2,Y\in \mathbb S_n(\C^\mu).$   

 To prove item \ref{i:more2} implies item \ref{i:more3}, 
let $\cW_1$ denote the words of degree at most one in each of 
 $x$ and $y$ 
 separately, and let $\cW_2$ denote the set of words 
 that have degree at least two, but no more than two in each
 of $x,y,$ but contains none of the words of the form
$x_jx_\ell y_ky_m$ or $(x_jx_\ell y_ky_m)^*,$
 for  $1 \le j,k,\ell, m \le \mu$. 
 
Since $p(x,y)$ is convex in $x$ and $y$ separately, 
from Corollary~\ref{prop:infoabtp},   $p$ has the form,
\[
p(x,y) = l(x,y) + q(x,y),
\]
where 
\[
 \ell(x,y) =\sum_{w\in \cW_1} p_w w, \ \ \
 q(x,y)=\sum_{w\in \cW_2} p_w w,
\]
 for some $p_w\in M_d(\C).$ 

Let $\cW_{2,x}$ denote those words in $\cW_2$ that have degree two in $x.$ 
Define $\cW_{2,y}$ similarly. A computation shows
\[
 \frac12 p_{x,x}(x,y)[x]=\frac12 q_{x,x}(x,y)[x] =\sum_{w\in \cW_{2,x}}   p_w w;
\]
 that is,
\begin{align*}
\frac 1 2 p_{xx}(x,y)[x] & = \frac 1 2 q_{xx}(x,y)[x] =  \sum_{j,k, \ell, m=1}^\mu [  p_{x_jx_\ell} x_j x_\ell\\
& + p_{x_j x_\ell y_k}x_jx_\ell y_k + p_{y_k x_\ell x_j} y_k x_\ell x_j  +   p_{x_jy_kx_\ell} x_j y_k x_\ell \\
& +  p_{x_jy_ky_mx_\ell} x_jy_ky_mx_\ell + p_{y_k x_j x_\ell y_m} y_k x_j x_\ell y_m \\
& + p_{x_jy_kx_\ell y_m} x_jy_kx_\ell y_m + p_{y_kx_jy_m x_\ell} y_kx_jy_m x_\ell]
\end{align*}
 Similarly, 
\[
 \frac12 p_{y,y}(x,y)[y]=\frac12 q_{y,y}(x,y)[y] =\sum_ {w\in \cW_{2,y} } p_w w
\]
Since $p(x,y)$ is convex in $x$ and $y$ separately, 
the partial Hessian of $p$ with respect to $x$ as well as $y$ is positive. 
In particular,
\begin{equation}
 \label{e:partials-positive}
p_{xx}(x,y)[x], \ \ p_{yy}(x,y)[y] \succeq 0.
\end{equation}

Let $\cW_{1,x}$ denote those words in $\cW_1$ that have
 degree one in $x.$ Define $\cW_{1,y}$ similarly. 
By  \cite[Theorem~0.2]{M}, the 
 positivity condition in equation~\eqref{e:partials-positive}
 implies there exist an 
$N$ and $N \times d$ matrix-valued free polynomials $f(x,y)$  and $g(x,y)$ such that 
\[
p_{xx}(x,y)[x] = f(x,y)^*f(x,y) \,\, \, \,  p_{yy}(x,y)[y] = g(x,y)^*g(x,y),
\] 
where 
\[
f(x,y) =  \sum_{w\in\cW_{1,x}} f_w w = \sum_{j,k=1}^\mu  f_{x_j} x_j + f_{x_jy_k}x_jy_k + f_{y_kx_j}y_kx_j 
\]
 and similarly, $g(x,y) =\sum_{w\in \cW_{1,y}} g_w w.$

Let $\cW_{1,x,y}$ denote the words of degree one in both $x$ and $y.$
 Let $x$ and $y$ denote the column vectors 
\[
 x=\begin{pmatrix} x_j \end{pmatrix}_{j=1}^\mu, \ \ \
 y=\begin{pmatrix} y_j\end{pmatrix}_{j=1}^\mu
\]
 and let {$v$} denote the column vector
\[
 v= \begin{pmatrix} w \end{pmatrix}_{w\in \cW_{1,x,y}}.
\]
 Let
\[
 \cW_x=\begin{pmatrix} x \\ v \end{pmatrix}, \ \ \
 \cW_y =\begin{pmatrix} v \\ y \end{pmatrix}.
\]
 Likewise, let $F_0,F_1$ and $F$ denote the row vectors,
\[
 F_0 =\begin{pmatrix} f_{x_j}\end{pmatrix}_{j=1}^\mu, \ \ \
 F_1 =\begin{pmatrix} f_{w} \end{pmatrix}_{w\in \cW_{1,x,y}}, \ \ \
 F=\begin{pmatrix} F_0&F_1 \end{pmatrix}
\]
 and similarly
\[
 G_0 = \begin{pmatrix} g_{x_j}\end{pmatrix}_{j=1}^\mu, \ \ \
 G_1 =\begin{pmatrix} g_{w} \end{pmatrix}_{w\in \cW_{1,x,y}}, \ \ \
 G=\begin{pmatrix} G_1&G_0 \end{pmatrix}.
\] 
 Thus,
\[
  f = F\cW_x, \ \ \ g=G\cW_y
\]
 and 
\[
 \cW_x^*F^*F \cW_x = f^*f, \ \
 \cW_y^*G^*G \cW_y = g^*g.
\]
 Let
\[
 P =\begin{pmatrix}{p_{u^*w} }\end{pmatrix}_{w\in \cW_{1,x,y}}
\]
 and observe that
\[
\begin{split}
 F^*F &
   =\begin{pmatrix} F_0^*F_0 &  F_0^*F_1 \\ F_1^*F_0 & P \end{pmatrix}, \, 
G^*G =\begin{pmatrix} P & G_1^*G_0 \\ G_0^*G_1 & G_0^*G_0\end{pmatrix}.
\end{split}
\]

 Let
\[
 \cM =\begin{pmatrix} F_0^*F_0 & F_0^*F_1 & 0 \\ F_1^*F_0 & P & G_1^*G_0\\
  0& G_0^*G_1 & G_0^*G_0 \end{pmatrix}, \ \
 \cW=\begin{pmatrix} x \\ v\\ y\end{pmatrix},
\]
 and observe 
\[
 q(x,y)=  \cW^* \, \cM \, \cW.
\]
 Since $F^*F$ and $G^*G$ are positive semidefinite,
 \cite[Proposition~1]{T} implies there is a 
$d \mu \times d \mu$ matrix $Q$ 
such that 
\[
\widehat{\mathcal M} =  
 \cM +\begin{pmatrix}0&0&Q\\0&0&0\\Q^* &0&0\end{pmatrix} =
 \begin{pmatrix} F_0^*F_0 & F_0^*F_1 & Q \\ F_1^*F_0 & P & G_1^*G_0\\
  Q^* & G_0^*G_1 & G_0^*G_0 \end{pmatrix}
\succeq 0.
\]

Letting $Q = \begin{pmatrix}  Q_{j,k}\end{pmatrix}_{j.k=1}^\mu \in M_\mu \otimes M_d$, it 
follows that 
\[
p(x,y) = \lambda(x,y) + 
 \mathcal W^* \widehat{\mathcal M} \mathcal W,
\]
where 
\begin{align*}
\lambda(x,y) &= l(x,y) - \left \{\sum_{j,k=1}^\mu Q_{j,k} x_jy_k + Q_{j,k}^*y_kx_j \right\} \\
          & = \sum_{j,k=1}^ \mu \left[p_{x_j} x_j +  p_{y_j}y_j + 
 (p_{x_jy_k} - Q_{j,k}) x_jy_k + (p_{y_kx_j} - Q_{j,k}^*) y_kx_j\right].
\end{align*}
Since $\widehat{\mathcal M} \succeq 0$, there exists a matrix $\mathcal R$ such that 
$\widehat{\mathcal M} = \mathcal R^* \mathcal R$. Finally, letting $\Lambda(x,y) = \mathcal R \mathcal W$, it follows that $\Lambda(x,y)$ is a $d \times d$ matrix-valued $xy$-pencil and 
\[
p(x,y) = \lambda(x,y)  + \Lambda(x,y)^*\Lambda(x,y). 
\]
To prove item~\ref{i:more3} implies item~\ref{i:more1},
let a triple $((X,Y),V)$ as in Proposition \ref{p:xy-convexp-alt} be given and observe,
\begin{align*}
 p(V^*(X,Y)V) & = \lambda(V^*(X,Y)V) + \Lambda(V^*(X,Y)V)^* \Lambda (V^*(X,Y)V) \\
                         & = (I_d \otimes V)^*\lambda(X,Y)(I_d \otimes V) + (I_d \otimes V)^* \Lambda(X,Y)^* (I_d \otimes V V^*) \Lambda(X,Y) (I_d \otimes V) \\
                        & \preceq (I_d \otimes V)^*\lambda(X,Y)(I_d \otimes V) + (I_d \otimes V)^* \Lambda(X,Y)^*   \Lambda(X,Y) (I_d \otimes V) \\
                        & = (I_d \otimes V)^*p(X,Y) (I_d \otimes V).
\end{align*}
It follows from Proposition \ref{p:xy-convexp-alt} that $p$ is $xy$-convex and the proof is complete. \qedhere 
\end{proof}

It is well known that sums of squares representations can be certified with a semidefinite program (and hence are tractable).   See for instance \cite{KP,KMP,BS,BKP}.
  For simplicity, {we treat the case $\mu =  1.$} 
 Thus, by Proposition~\ref{prop:infoabtp}, 
\[
 p(x,y) = \sum p_w w,
\]
 where the sum is over  words in $x,y$ of 
 degree at most two in both $x$ and $y,$ 
 but  excluding $x^2y^2$ and $y^2x^2.$ 
 Let



\begin{proposition}
 \label{prop:SDP}
   The polynomial $p$ is $xy$-convex if and only if there is a positive semidefinite 
  $Q=(Q_{j,k})_{j,k=1}^4$  such that 
\begin{enumerate}
 \item $Q_{j,k}=p_{w_jw_k}$  for $1 \le j,k \le 2$ and $3\le j,k\le 4;$
 \item $Q_{1,3}+ Q_{3,1}= p_{w_1 w_3^*};$ and 
 \item $Q_{2,4}+ Q_{4,2}=p_{w_2 w_4^*}$
\end{enumerate}
where $w_1 = x, w_2 =y, w_3 = xy$ and $w_4 = yx$.
\end{proposition}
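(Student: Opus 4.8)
The plan is to specialize the structure developed in the proof of Proposition~\ref{p:more-is-true} to the single-variable case $\mu=1$ and repackage it in the explicit matrix form stated here. Recall that, by Proposition~\ref{prop:infoabtp}, an $xy$-convex $p$ lies in $M_d\otimes\spann\mathcal L$, so its degree-two-in-each-of-$x,y$ part $q$ has border-vector--middle-matrix representation $q(x,y)=\cW^*\cM\cW$, where in the case $\mu=1$ the vector is $\cW=\begin{pmatrix} x & xy & yx & y\end{pmatrix}^*$, i.e.\ the four words $w_1,\dots,w_4$ of the statement. The idea is that the proof of Proposition~\ref{p:more-is-true} already produced, from separate convexity in $x$ and in $y$, the positive semidefinite completion $\widehat{\cM}$ of $\cM$ whose off-diagonal $(1,3)$-ish block was adjusted by the free parameter $Q$; here $\widehat{\cM}$ simply becomes the $4\times 4$ block matrix $Q=(Q_{j,k})$, and the constraints (1)--(3) are exactly the conditions that $\cW^*Q\cW$ reconstruct $q$ (equivalently $p$, modulo the $xy$-pencil part $\lambda$, which imposes no positivity constraint).

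First I would make the reduction precise: write $p=\ell(x,y)+q(x,y)$ with $\ell$ an $xy$-pencil (affine linear in each of $x$ and $y$ with only the words $1,x,y,xy,yx$) and $q=\cW^*\cM\cW$ collecting the remaining words $w_jw_k$. The words $w_jw_k$ for $1\le j,k\le 2$ are exactly the degree-two-in-$x$ words $x^2, x\cdot xy=x^2y$ (wait --- carefully, $w_1w_1=x^2$, $w_1w_2 = x(xy)=x^2y$, $w_2w_1=(xy)x=xyx$, $w_2w_2=(xy)(xy)=xyxy$); for $3\le j,k\le 4$ they are the degree-two-in-$y$ words; the cross term $w_1w_3^*$ with $w_3^* = (xy)^* = yx$... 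I need to be careful with the involution bookkeeping. The correct statement is that a word $w$ appears in $q(x,y)$ with coefficient $p_w$, and in $\cW^*Q\cW$ the word $w_i^*w_jw_k\cdots$ --- actually $(\cW^*Q\cW)$ has entries $w_i^* Q_{i,j} w_j$ summed, so the word content is $w_i^* w_j$. So I want: for each $i,j$, the words $w_i^*w_j$ that occur, matched against $q$. The diagonal-block constraints $Q_{j,k}=p_{w_j^* w_k}$ (for $j,k$ both in $\{1,2\}$ or both in $\{3,4\}$) pin down the parts of $q$ with degree two in $x$ (resp.\ $y$); the single off-diagonal constraints $Q_{1,3}+Q_{3,1}=p_{w_1^*w_3}$... and one checks $w_1^*w_3 = x\cdot xy$, hmm, that has degree two in $x$, which is wrong. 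So the indexing in the proposition must be using $w_3=xy,w_4=yx$ paired so that $w_1 w_3^*$ (as written, \emph{not} $w_1^* w_3$) is the relevant word; $w_1 w_3^* = x(xy)^* = x\,yx = xyx$... still off. I would recheck against the $\mu=1$ specialization of $\widehat{\cM}$ in the proof of Proposition~\ref{p:more-is-true}: there $\cW=\begin{pmatrix}x\\ v\\ y\end{pmatrix}$ with $v$ the single word of degree one in both, i.e.\ $v$ should be taken as $xy$ in one slot and the completion handles $yx$; matching that ordering to $w_1,\dots,w_4$ here fixes the precise form of (2) and (3). This bookkeeping --- getting the involution and the ordering of the $w_j$ to line up so that the three scalar-matrix equations (1)--(3) are exactly equivalent to ``$\exists$ psd $Q$ with $\cW^*Q\cW = q$'' --- is the one genuinely fiddly step; everything else is a direct quotation.

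Given that bookkeeping, the forward direction is immediate: if $p$ is $xy$-convex then by (the $\mu=1$ case of) Proposition~\ref{p:more-is-true}, $p=\lambda+\Lambda^*\Lambda$ with $\Lambda$ an $xy$-pencil, and $\Lambda=\mathcal R\cW$ for the extended vector, so $Q:=\mathcal R^*\mathcal R\succeq 0$ is the desired completion and its blocks satisfy (1)--(3) by comparing coefficients. For the converse, given a psd $Q$ satisfying (1)--(3), factor $Q=\mathcal R^*\mathcal R$, set $\Lambda(x,y)=\mathcal R\cW$ (an $xy$-pencil since $\cW$ has entries of degree one in the relevant sense) and $\lambda(x,y)=p(x,y)-\Lambda(x,y)^*\Lambda(x,y)$; conditions (1)--(3) guarantee that all the degree-two-in-$x$, degree-two-in-$y$, and the constrained cross words of $p$ are absorbed by $\Lambda^*\Lambda$, so $\lambda$ is an $xy$-pencil, and then $xy$-convexity follows from the implication \ref{i:more3}$\Rightarrow$\ref{i:more1} of Proposition~\ref{p:more-is-true} (the Schur-complement/$VV^*\preceq I$ computation at the end of that proof). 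The only mild subtlety in the converse is checking that the entries of $q$ \emph{not} subject to any of (1)--(3) --- namely those coming from $Q_{1,4},Q_{4,1},Q_{2,3},Q_{3,2}$, which involve words like $x\cdot yx$ that also occur in the $xy$-pencil degree range --- are harmless, i.e.\ they can be thrown into $\lambda$ without destroying its pencil structure; this is exactly the role played by the free parameter $Q$ (here $Q_{1,3},Q_{2,4}$, only their symmetric sums being constrained) in the proof of Proposition~\ref{p:more-is-true}, so the argument there transfers verbatim.
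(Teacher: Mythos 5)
Your overall route coincides with the paper's: both directions are reduced to Proposition~\ref{p:more-is-true}, taking $Q=F^*F$ from the pencil $\Lambda$ in the forward direction and rebuilding $\Lambda$ (and hence $xy$-convexity, via the implication (iii)$\Rightarrow$(i) of that proposition) from a factorization of $Q$ in the converse. The paper itself dismisses the coefficient matching as ``a straightforward (but tedious) computation,'' so in outline the two arguments agree.

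The gap is that you never complete the one step you correctly single out as substantive, and your trial computations go wrong because you attach $F_3$ to $w_3=xy$ itself. The paper's choice is $\Lambda(x,y)=F_1x+F_2y+F_3\,yx+F_4\,xy$, that is, $F_j$ is attached to $w_j^*$, so the $(j,k)$ term of $\Lambda^*\Lambda$ carries the word $w_jw_k^*$. With that choice the two cross terms indexed by $(1,3)$ and $(3,1)$ both produce the single word $x(yx)=(xy)x=xyx=w_1w_3^*$, which is exactly why only the sum $Q_{1,3}+Q_{3,1}$ is constrained in (2) (similarly $yxy=w_2w_4^*$ for (3)); the entries $Q_{1,4},Q_{4,1},Q_{2,3},Q_{3,2}$ hit the words $x^2y,\ yx^2,\ y^2x,\ xy^2$ one apiece and are each forced; $Q_{3,3},Q_{4,4},Q_{3,4},Q_{4,3}$ hit $xy^2x,\ yx^2y,\ xyxy,\ yxyx$; and $Q_{1,2},Q_{2,1}$ hit the pencil words $xy,yx$, which is where the free parameter lives. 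Your suspicion that the statement's indexing does not line up literally is justified --- the $(3,3)$ entry corresponds to $w_3w_3^*=xy^2x$ rather than $w_3w_3$, and requiring $Q_{1,2}=p_{xy}$ is neither needed nor, e.g.\ for $p=x^2+y^2+10(xy+yx)$, achievable with $Q\succeq0$ --- but the proof requires resolving this discrepancy against the construction, not merely flagging it. As written, conditions (1)--(3) are never actually matched to the coefficient identities that make $p-\Lambda^*\Lambda$ an $xy$-pencil, so neither implication is established.
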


\begin{proof}[Sketch of proof]
  Assuming such a $Q$ exists, factor $Q$ as $F^*F$ and write, 
\[
 F=\begin{pmatrix} F_1 &F_2 & F_3 & F_4 \end{pmatrix}
\]
{for some $N \times d$ matrices $F_j$.}
Let 
\begin{equation}
\label{e:Lambdaxy}
 \Lambda(x,y) =F_1 x +F_2 y + F_3 yx + F_4 xy.
\end{equation}
 Thus $\Lambda(x,y)$ is an $xy$-pencil. 
 A straightforward (but tedious) computation as in \cite{JKMMP21} verifies,
\[
{ p(x,y) - \Lambda(x,y)^*\Lambda(x,y)  = \lambda(x,y),}
\]
 where $\lambda(x,y)$ has degree at most one in each of  $x,y.$

 Now suppose $p$ has the form 
\[
 p(x,y) =\lambda(x,y) +\Lambda(x,y)^*\Lambda(x,y),
\]
 where $\lambda(x,y)$ has degree at most one in each of $x,y$ and 
{$\Lambda(x,y) = \Lambda_ x x+ \Lambda_y y +\Lambda_{yx} yx +\Lambda_{xy} xy.$}
 In this case, let
\[
 F= \begin{pmatrix} \Lambda_x & \Lambda_y & \Lambda_{yx} &  \Lambda_{xy} 
 \end{pmatrix}                    
\]
 and check that $Q=F^*F\succeq 0$ has the desired properties.
\end{proof}


\newpage

\printindex


\begin{thebibliography}{99}

\bibitem[A]{A} Amitsur, S. A., {\it Identities in rings with involutions,} Israel J. Math. 7 (1969), 63-–68.

\bibitem[BtN02]{BtN} A. Ben-Tal and  A. Nemirovski, 
{\it  On tractable approximations of uncertain
linear matrix inequalities affected by interval uncertainty}, SIAM J.
Optimization,  {\bf 12} (2002) 811--833.

\bibitem[BS21]{BS}
G. Blekherman, K.  Shu, 
\emph{Sums of squares and sparse semidefinite programming,}
  SIAM J. Appl. Algebra Geom. 5 (2021), no. 4, 651--674.

\bibitem[BKP03]{BKP}
 S. Burgdorf, I. Klep, J. Povh, 
\emph{Optimization of polynomials in non-commuting variables,} SpringerBriefs in Mathematics. Springer, [Cham], 2016. xv+104 pp. ISBN: 978--3--319-33336--6; 978--3--319--33338--0.

\bibitem[CHSY]{CHSY} J.F. Camino, J.W. Helton, R.E. Skelton and J. Ye, 
 {\it Matrix inequalities: A symbolic procedure to determine convexity automatically}, Integral Equations and Operator Theory 46 (2003), 399--454.

\bibitem[DDSS17]{DDSS}
K.R. Davidson, A. Dor-On, O. Shalit, B. Solel,
{\it  Dilations, inclusions of matrix convex sets, and
completely positive maps,}
  Int. Math. Res. Not. IMRN 2017, 4069--4130.

\bibitem[DF]{DF} Drensky, V., Formanek, E., {\it The Amitsur—Levitzki Theorem, In: Polynomial Identity Rings,}
 Advanced Courses in Mathematics CRM Barcelona  Birkhäuser, Basel (2004)
{\url{https://doi.org/10.1007/978-3-0348-7934-7_4}}


\bibitem[dOH06]{convert-to-matin}
 M.C. De Oliveira and  J. W. Helton, 
{\it Computer algebra tailored to matrix inequalities incontrol}
 International Journal of Control, (2006)
 79:11, 1382-1400, DOI:10.1080/00207170600725529






\bibitem[HHLM08]{HHLM08}   D.M. Hay, J.W. Helton, A. Lim and S. McCullough,
 {\it Non commutative Partial Matrix Convexity}, Indiana University Mathematics Journal, Vol. 57, no. 6 (2008): 2815--42.

\bibitem[HKMS17]{matcube}
 Bill Helton, Igor Klep, Scott McCullough and Markus Schweighofer,
{\it Dilations, linear matrix inequalities, the matrix cube problem and beta distributions,}
Memoirs of the American Mathematical Society, 
  257 (2019), no. 1232.


\bibitem[HMdOV09]{IMA-survey} 
  J.W. Helton, S. McCullough, M. de Oliveira, and V. Vinnikov, 
 {\it Engineering systems and free semi-algebraic geometry,} in 
  Emerging applications of algebraic geometry,
  17--61, IMA Vol. Math. Appl., 149, Springer, New York, 2009. 


\bibitem[JKMMP21]{JKMMP21} Michael Jury, Igor Klep, Mark E. Mancuso, Scott McCullough, James Eldred Pascoe, {\it Noncommutative partially convex rational functions.}, Rev. Mat. Iberoam. 38 (2022), no. 3, pp. 731--759.

\bibitem[JKMMP21a]{JKMMP21a} M. Jury, I. Klep, M.E. Mancuso, S. McCullough, J. Pascoe,{\it Noncommutative Partial Convexity Via $\Gamma$-Convexity},
 J Geom Anal 31, 3137--3160 (2021).


\bibitem[KSVdS04]{KSVdS}
S. Kanev, C. Scherer , M. Verhaegen and  B. De Schutter,
\emph{Robust output-feedbackcontroller design via local BMI optimization,}
Automatica 40 (2004) 1115--1127.

\bibitem[KMP22]{KMP}  I. Klep, V. Magron, J.  Povh, 
\emph{Sparse noncommutative polynomial optimization,}
 Math. Program. 193 (2022), no. 2, Ser. B, 789-–829

\bibitem[KP10]{KP} I. Klep, K. Povh, 
\emph{Semidefinite programming and sums of Hermitian squares of noncommutative polynomials,} J. Pure Appl. Algebra 214 (2010), no. 6, 740--749.

\bibitem[M]{M} McCullough, S., {\it Factorization of operator-valued polynomials in several non-commuting variables},
Linear Algebra and its Applications 326 (2001), 193-203.

\bibitem[PT-D+]{royal}
  J. E. Pascoe, Ryan Tully-Doyle
 \emph{The royal road to automatic noncommutative real analyticity, 
   monotonicity, and convexity,} {\url{arXiv:1907.05875}}.

\bibitem[Pa03]{paulsen}
  Paulsen, V., {\it Completely bounded maps and operator algebras}, Cambridge University Press, 2003.

\bibitem[Pi03]{pisier}
Pisier, G., 
\emph{Introduction to operator space theory,}
  London Mathematical Society Lecture Note Series, 294.
  Cambridge University Press, Cambridge, 2003.

\bibitem[SGL94]{SGL}
 M. G. Safonov, K. C. Goh, J.H.Ly, 
\emph{Control Systems Sythesis via Bilinear Matrix Inequalities,}
 Proceedings of the American Control Conference, Baltimore, Maryland, June 1994.

\bibitem[T]{T} Timotin, D., {\it A note on Parrott's strong theorem}, Journal of Mathematical Analysis and its Applications, 
171, pp 288--293.


\bibitem[vAB00]{vAB} 
 Jeremy G. Van Antwerp and  Richard D. Braatz,
\emph{A tutorial on linear and bilinear matrix inequalities,} 
 Journal of Process Control 10 (2000) 363--385.



\end{thebibliography}
\end{document}